\renewcommand{\vec}[1]{\textbf{\textit{#1}}}
\definecolor{dark_blue}{rgb}{0.1,0.1,0.44}
\numberwithin{theorem}{section}
\spnewtheorem{corollary}[theorem]{Corollary}{\bfseries}{\itshape}
\spnewtheorem{definition}[theorem]{Definition}{\bfseries}{\rmfamily}
\spnewtheorem{lemma}[theorem]{Lemma}{\bfseries}{\itshape}
\begin{document}

\title*{Fast Fourier Transforms for Spherical Gauss-Laguerre Basis Functions}
\author{J\"urgen Prestin and Christian W\"ulker}
\institute{J\"urgen Prestin \at Institute of Mathematics, University of L\"ubeck, Ratzeburger Allee 160, 23562 L\"ubeck, Germany
\and Christian W\"ulker \at Institute of Mathematics, University of L\"ubeck, Ratzeburger Allee 160, 23562 L\"ubeck, Germany, 
\email{wuelker@math.uni-luebeck.de}; a MATLAB implementation of the algorithms presented in this paper is available at 
https://github.com/cwuelker/SGLPack}

\maketitle

\abstract{Spherical Gauss-Laguerre (SGL) basis functions, i.e., normalized functions of the type $L_{n-l-1}^{(l + 1/2)\!}
(r^2) \hspace*{1pt} r^{\hspace*{0.5pt}l} \hspace*{1pt} Y_{lm}(\vartheta,\varphi)$, $|m| \leq l < n \in \mathbb{N}$, 
$L_{n-l-1}^{(l + 1/2)\!}$ being a generalized Laguerre polynomial, $Y_{lm}$ a spherical harmonic, constitute an orthonormal 
basis of the space $L^{2\!}$ on $\mathbb{R}^{3\!}$ with Gaussian weight $\exp(-r^{2})$. These basis functions are used 
extensively, e.g., in biomolecular dynamic simulations. However, to the present, there is no reliable algorithm 
available to compute the Fourier coefficients of a function with respect to the SGL basis functions in a fast way. This 
paper presents such generalized FFTs. We start out from an SGL sampling theorem that permits an exact computation of 
the SGL Fourier expansion of bandlimited functions. By a separation-of-variables approach and the employment of a fast 
spherical Fourier transform, we then unveil a general class of fast SGL Fourier transforms. All of these algorithms have 
an asymptotic complexity of $\mathcal{O}(B^{4})$, $B$ being the respective bandlimit, while the number of sample points 
on $\mathbb{R}^{3\!}$ scales with $B^{3\!}$. This clearly improves the naive bound of $\mathcal{O}(B^{7})$. At the same 
time, our approach results in fast inverse transforms with the same asymptotic complexity as the forward transforms. We 
demonstrate the practical suitability of our algorithms in a numerical experiment. Notably, this is one of the first 
performances of generalized FFTs on a non-compact domain. We conclude with a discussion, including the layout of a true 
$\mathcal{O}(B^{3} \log^{2\!} B)$ fast SGL Fourier transform and inverse, and an outlook on future developments.}

\section{Introduction}\label{sec:intro}
\begin{sloppypar}
Since its popularization by \cite{fft}, the \emph{Fast Fourier Transform} (FFT) on the unit circle $\mathbb{T}$ 
and its inverse (iFFT) have been generalized to several other domains and corresponding sets of basis functions. 
For example, many applications in signal processing and data analysis nowadays benefit from an extension of the 
univariate FFTs to the $d$-dimensional Torus $\mathbb{T}^{d\!}$\!\hspace*{1pt} ($d \!>\! 1$), where multivariate 
trigonometric polynomials are used in analogy to the univariate case (see, e.g., \cite[Sect.\ 2]{mfft}). 
Another example is the two-dimensional unit sphere $\mathbb{S}^{2\!}$. Here, the spherical harmonics are 
used as an orthonormal basis of the space $L^{2}(\mathbb{S}^{2})$ of functions square-integrable over 
$\mathbb{S}^{2\!}$\!\hspace*{1pt} (see \citep{sfft, sfft2, sfft3, sfft4}, for instance). This has also initiated 
the development of fast Fourier transforms on the tree-dimensional rotation group $\textnormal{SO}(3)$, where 
the spherical harmonics are replaced by so-called Wigner-$D$ functions (see \citep{sofft, sofft2, sofft3}, for 
example). Recently, certain combinations of spherical harmonics, generalized Laguerre polynomials, and 
an exponential radial decay factor were used as orthonormal basis functions of the space $L^{2}(\mathbb{B}^{3})$ 
of square-integrable functions on the three-dimensional unit ball $\mathbb{B}^{3}$; a fast Fourier transform was 
developed in this setting as well (see \citep{ball} for more information).
\end{sloppypar}

In this work, we introduce fast Fourier transforms on the entire three-dimensional real space $\mathbb{R}^{3\!}$. 
On the one hand, this extends the above collection of domains in a natural direction; on the other hand, due to 
the non-compactness of $\mathbb{R}^{3\!}$, we find ourselves in a somewhat new situation.

Of course, the non-compactness of the underlying domain has to be accounted for. While it is conceivable to 
consider basis functions that exhibit an appropriate decay behavior, in this work, we endow the space $L^{2}
(\mathbb{R}^{3})$ with the Gaussian weight function $\exp(-|\cdot|^{2})$, where $|\cdot|$ denotes the 
standard Euclidean norm (such weight function is also referred to as a \emph{multivariate Hermite 
weight} in literature). In particular, we consider the weighted $L^{2\!}$ 
space
\begin{equation*}
H \hspace*{0pt}\coloneqq\hspace*{0pt} \left\lbrace f : \mathbb{R}^{3\!} \to \mathbb{C} 
\hspace*{0.75pt}:\hspace*{0.75pt} f~\textnormal{(Lebesgue) measurable and}\hspace*{1pt}
\int_{\mathbb{R}^{3}}\! |f(\vec{x})|^{2\!} \hspace*{1pt} \exp(-|\vec{x}|^{2}) \hspace*{1pt} \mathrm{d}\vec{x} < \infty 
\right\rbrace\!\!,
\end{equation*}
endowed with the inner product
\begin{equation}\label{eq:inner_product}
\langle f, g \rangle_{\!\hspace*{0.5pt}H} \hspace*{2pt}\coloneqq\hspace*{2pt} \!\int_{\mathbb{R}^{3}} f(\vec{x}) \hspace*{1pt} \overline{g(\vec{x})} \hspace*{0pt} \exp(-|\vec{x}|^{2}) \hspace*{1pt} \mathrm{d}\vec{x}, ~~~~~~ f, g \in H,
\end{equation}
and induced norm $\|\cdot\|_{H} \coloneqq \sqrt{\langle\cdot,\cdot\rangle_{\!\hspace*{0.5pt}H}}$.

\begin{sloppypar}
A crucial feature of the space $H$ is that it allows to work with such structurally simple functions as 
\emph{polynomials}. Particularly, as recently noted by \citet[Sect.\ 1]{density}, we have the following 
result, essential for everything to follow:
\end{sloppypar}

\begin{theorem}\label{thm:density}
The class of (complex-valued) polynomials on \hspace*{0.5pt}$\mathbb{R}^{3\!}$ is dense in $H$, i.e., 
any function $f \in H$ can be approximated arbitrarily well by polynomials with respect to $\|\cdot\|_{H}$.
\end{theorem}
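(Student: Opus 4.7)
The plan is to prove the contrapositive via the projection theorem for Hilbert spaces: if $f \in H$ is orthogonal (with respect to $\langle\cdot,\cdot\rangle_H$) to every polynomial on $\mathbb{R}^3$, then $f = 0$ almost everywhere, which is equivalent to density. Setting $g(\vec{x}) \coloneqq \overline{f(\vec{x})}\,\exp(-|\vec{x}|^{2})$, the orthogonality hypothesis translates into the moment condition
\begin{equation*}
\int_{\mathbb{R}^{3}} g(\vec{x})\, x_{1}^{\alpha_{1}} x_{2}^{\alpha_{2}} x_{3}^{\alpha_{3}} \, \mathrm{d}\vec{x} \;=\; 0 \qquad \text{for every } \boldsymbol{\alpha} \in \mathbb{N}_{0}^{3},
\end{equation*}
so the task reduces to showing that this moment-killing forces $g$, and hence $f$, to vanish almost everywhere.

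The first substantive step is a decay estimate: I would show that $\int_{\mathbb{R}^{3}} |g(\vec{x})|\exp(c|\vec{x}|)\,\mathrm{d}\vec{x} < \infty$ for every $c > 0$. This follows from a single Cauchy-Schwarz application after writing $|g(\vec{x})|\exp(c|\vec{x}|) = \bigl(|f(\vec{x})|\exp(-|\vec{x}|^{2}/2)\bigr)\cdot\bigl(\exp(-|\vec{x}|^{2}/2 + c|\vec{x}|)\bigr)$, since the first factor has $L^{2}$-norm equal to $\|f\|_{H}$ and the second factor is square-integrable because $-|\vec{x}|^{2} + 2c|\vec{x}| \to -\infty$.

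This estimate guarantees that the Fourier integral $\hat{g}(\boldsymbol{\zeta}) \coloneqq \int_{\mathbb{R}^{3}} g(\vec{x})\exp(-\mathrm{i}\hspace*{0.5pt}\boldsymbol{\zeta}\cdot\vec{x})\,\mathrm{d}\vec{x}$ defines an entire function on all of $\mathbb{C}^{3}$, with differentiation under the integral sign justified at every order. The moment hypothesis then yields $\partial^{\boldsymbol{\alpha}}\hat{g}(\vec{0}) = 0$ for every multi-index $\boldsymbol{\alpha}$, so the Taylor expansion of $\hat{g}$ at the origin is identically zero; by the identity theorem for holomorphic functions in several complex variables, $\hat{g} \equiv 0$, whence $g = 0$ almost everywhere by Fourier uniqueness, and $f = 0$ almost everywhere as desired.

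The main obstacle I anticipate is purely technical: making the decay estimate precise enough to rigorously justify both the analytic continuation of $\hat{g}$ to $\mathbb{C}^{3}$ and the repeated differentiation under the integral, ideally by a single dominated-convergence argument uniform in $\boldsymbol{\zeta}$ on compact subsets of $\mathbb{C}^{3}$. Once that estimate is in place, the remaining ingredients--the projection-theorem reduction to a moment problem, the moment-to-Taylor-coefficient translation, and the identity theorem--are standard and essentially bookkeeping.
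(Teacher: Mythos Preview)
Your argument is correct and is in fact the standard route to this result: reduce density to an orthogonality statement via the projection theorem, convert the orthogonality hypothesis into a vanishing-moments condition on $g(\vec{x}) = \overline{f(\vec{x})}\exp(-|\vec{x}|^{2})$, use the Gaussian decay to show that the Fourier--Laplace transform of $g$ extends to an entire function on $\mathbb{C}^{3}$, and then invoke the identity theorem. Your Cauchy--Schwarz estimate is exactly what is needed, and it does double duty: it shows $g \in L^{1}(\mathbb{R}^{3})$ (so Fourier uniqueness applies) and, since $|\exp(-\mathrm{i}\boldsymbol{\zeta}\cdot\vec{x})| \leq \exp(|\mathrm{Im}\,\boldsymbol{\zeta}|\,|\vec{x}|)$, it provides the uniform-on-compacta integrable majorant you need for both analytic continuation and differentiation under the integral sign. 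The technical obstacle you flag is therefore already handled by the single estimate you wrote down.

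As for comparison with the paper: the paper does not actually prove Theorem~\ref{thm:density}. It is stated as a known result and attributed to \citet[Sect.~1]{density}; no argument is given in the paper itself. Your proposal thus supplies a self-contained proof where the paper only provides a citation.
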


Having this in mind, it appears natural to employ appropriately normalized orthogonal polynomials 
as an orthonormal basis of the Hilbert space $H$. In view of this, however, we note that orthogonal 
polynomials in $H$ are not unique, as we should expect in the univariate setting. In fact, a review 
of the relevant literature reveals several different variants of such, arising from different construction 
approaches (see, e.g., \cite[Sect.\ 5.1.3]{dunkl_xu}).

By a \emph{separation-of-variables} approach, \citet{gto} constructed particular orthogonal polynomials in $H$
from the well-known spherical harmonics (Definition \ref{def:spherical_harmonics}) and generalized Laguerre 
polynomials (Theorem \ref{thm:generalized_laguerre}). We call these \emph{spherical Gauss-Laguerre} (SGL) 
\emph{basis functions} (the term `Gaussian' is to account for the Gaussian weight on $H$).

\begin{definition}[\textbf{SGL basis functions}]\label{def:sgl}
The SGL basis function of \emph{degree} $n \in \mathbb{N}$ and \emph{orders} $l \in \{0,\dots,n-1\}$ and 
$m \in \{-l,\dots,l\}$ is defined in spherical coordinates (see Section \ref{sec:sgl}) as
\begin{equation}\label{eq:sgl}
H_{nlm} : \mathbb{R}^{3\!} \to \mathbb{C}, ~~~~~~ H_{nlm} (r,\vartheta,\varphi) \hspace*{2pt}\coloneqq\hspace*{2pt} N_{nl} \hspace*{1pt} R_{nl} (r) \hspace*{1pt} Y_{lm}(\vartheta,\varphi),
\end{equation}
where $N_{nl}$ is a normalization constant, 
\begin{equation*}
N_{nl} \hspace*{2pt}\coloneqq\hspace*{2pt} \sqrt{\frac{2 (n - l - 1)!}{\Gamma(n + 1/2)}}, 
\end{equation*}
$Y_{lm}$ is the spherical harmonic of degree $l$ and order $m$, while the radial part $R_{nl}$ is defined 
as
\begin{equation*}
R_{nl}(r) \hspace*{2pt}\coloneqq\hspace*{2pt} L_{n - l - 1}^{(l + 1/2)\!}(r^{2}) \hspace*{1pt} r^{l\!},
\end{equation*}
$L_{n - l - 1}^{(l + 1/2)}$ being a generalized Laguerre polynomial.
\end{definition}

By construction of the SGL basis functions, these polynomials are orthonormal in $H$ and span the space 
of all polynomials on $\mathbb{R}^{3\!}$ (see Section \ref{sec:sgl}). The completeness of this orthonormal 
system thus follows from Theorem \ref{thm:density}.

\begin{corollary}\label{thm:sgl_2}
The SGL basis functions $H_{nlm}$ constitute an orthonormal basis (i.e., a complete orthonormal system) in 
$H$. In particular, for any $f \in H$, the \emph{Fourier partial sums}
\begin{equation}\label{eq:SGL_partial_sum}
\sum_{n = 1}^{B} \hspace*{3.5pt} \sum_{l = 0}^{n - 1} \sum_{m = -l}^{l}\! \langle f, H_{nlm} \rangle_{\!\hspace*{0.5pt}H} \hspace*{1pt} H_{nlm}, ~~~~~~ B \in \mathbb{N},
\end{equation}
converge to $f$ in the norm of \hspace*{0.75pt}$H$\!\hspace*{0.75pt} as \hspace*{0.15pt}$B$ 
approaches $\infty$.
\end{corollary}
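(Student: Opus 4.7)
The plan is to reduce the corollary to the classical Hilbert-space fact that an orthonormal system is complete if and only if the closure of its linear span is the whole space. Orthonormality of $\{H_{nlm}\}$ and the statement that these functions span all polynomials on $\mathbb{R}^{3\!}$ are already recorded in the paragraph preceding the corollary, so my only task is to upgrade this algebraic spanning property to $H$-norm density, and then to read off convergence of the partial sums from the best-approximation property of orthogonal projections.

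In detail, I would fix $f \in H$ and $\varepsilon > 0$, and first invoke Theorem \ref{thm:density} to produce a polynomial $p$ with $\|f - p\|_{H} < \varepsilon$. Because the $H_{nlm}$ span the space of all polynomials on $\mathbb{R}^{3\!}$, $p$ is a \emph{finite} linear combination of SGL basis functions. Let $B_{0} = B_{0}(p)$ denote the largest degree index $n$ appearing in this expansion; then $p$ lies in the finite-dimensional subspace
\begin{equation*}
V_{B} \hspace*{2pt}\coloneqq\hspace*{2pt} \operatorname{span}\{H_{nlm} : 1 \leq n \leq B, \, 0 \leq l < n, \, |m| \leq l\}
\end{equation*}
for every $B \geq B_{0}$.

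To finish, I would observe that because the $H_{nlm}$ are orthonormal in $H$, the $B$-th Fourier partial sum $S_{B}f$ in \eqref{eq:SGL_partial_sum} is precisely the orthogonal projection of $f$ onto $V_{B}$, and hence the unique minimizer of $g \mapsto \|f - g\|_{H}$ over $g \in V_{B}$. For every $B \geq B_{0}$ this yields $\|f - S_{B}f\|_{H} \leq \|f - p\|_{H} < \varepsilon$, which is exactly the norm convergence asserted by the corollary. I do not anticipate a genuine obstacle: the only non-trivial ingredient is Theorem \ref{thm:density}, which is imported. The subtlety that would otherwise be fatal on a non-compact domain -- that polynomials need not even belong to $L^{2\!}$, let alone be dense -- is precisely what the Gaussian weight in \eqref{eq:inner_product} removes, reducing the question to a textbook best-approximation argument.
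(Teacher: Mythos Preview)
Your argument is correct and follows the same line as the paper: the paper simply records, in the sentence immediately preceding the corollary, that the $H_{nlm}$ are orthonormal and span all polynomials on $\mathbb{R}^{3}$, and then states that completeness follows from Theorem~\ref{thm:density}. You have merely spelled out the standard best-approximation step that the paper leaves implicit.
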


In this paper, we present a general class of algorithms for the efficient numerical computation of the SGL 
Fourier coefficients $\hat{f}_{nlm} \coloneqq \langle f, H_{nlm} \rangle_{\!\hspace*{0.5pt}H}$ in 
\eqref{eq:SGL_partial_sum}\:--\:that is, we present \emph{fast SGL Fourier transforms}. 
As is commonly done in generalized FFTs, we develop our algorithms starting out from a concomitant quadrature 
formula, so that these algorithms are exact (in exact arithmetics) for bandlimited functions (see Section 
\ref{sec:sglft}). Inspired by the construction of the SGL basis functions, our approach is based on 
a separation of variables, separating the radius $r$ from the angles $\vartheta$ and $\varphi$. For the radial 
part of our fast transforms, we introduce the \emph{discrete $R$ transform} (Section \ref{subsubsec:DRT}). The 
spherical part of our transforms is constituted by a fast spherical Fourier transform, 
i.e., a generalized FFT for the spherical harmonics. Notably, our approach also results in fast inverse 
transforms with the same asymptotic complexity as the forward transforms: All of our fast algorithms have an 
asymptotic complexity of $\mathcal{O}(B^{4})$, $B$ being the respective bandlimit, while the number of sample 
points on $\mathbb{R}^{3\!}$ scales with $B^{3\!}$. This clearly improves the naive bound of $\mathcal{O}(B^{7})$.

Applications of our fast algorithms arise, for example, in the simulation of biomolecular recognition processes, 
such as \emph{protein-protein} or \emph{protein-ligand docking} (see Section \ref{sec:discussion}).

The rest of this paper is organized as follows: In Section \ref{sec:sgl}, we review the construction 
of the SGL basis functions. This section is optional to the reader interested solely in our fast algorithms. 
Subsequently, in Section \ref{sec:sglft}, we develop fast SGL Fourier transforms. The resulting 
algorithms are tested in a prototypical numerical experiment in Section \ref{sec:experiments}. In Section
\ref{sec:discussion}, we discuss the results, draw final conclusions, and give an outlook on future 
developments. We also include the layout of a true $\mathcal{O}(B^{3} \log^{2\!} B)$ fast SGL Fourier 
transform and inverse.

\section{Spherical Gauss-Laguerre (SGL) basis functions}\label{sec:sgl}
As mentioned above, the SGL basis functions of Definition \ref{def:sgl} arise from a particular construction approach by \cite{gto}. 
This approach comprises multiple steps. The first step is the introduction of \emph{spherical coordinates}. We define these as 
\emph{radius} $r \in [0,\infty)$, \emph{polar angle} $\vartheta \in [0,\pi]$, and \emph{azimuthal angle} $\varphi \in [0,2\pi)$, 
being connected to Cartesian coordinates $x$, $y$, and $z$, via
\begin{align*}
x \hspace*{2pt}&=\hspace*{2pt} r \hspace*{1pt} \sin\vartheta \hspace*{1pt} \cos\varphi,\\
y \hspace*{2pt}&=\hspace*{2pt} r \hspace*{1pt} \sin\vartheta \hspace*{1pt} \sin\varphi,\\
z \hspace*{2pt}&=\hspace*{2pt} r \hspace*{1pt} \cos\vartheta.
\end{align*}  

In the following, with a slight abuse of notation, we write $f(\vec{x}) = f(r,\vartheta,\varphi)$ if $(r,\vartheta,\varphi)$ 
are the spherical coordinates of the point $\vec{x} = (x,y,z) \in \mathbb{R}^{3\!}$, in which case we simply write 
$\vec{x} = (r,\vartheta,\varphi)$. This allows the inner product \eqref{eq:inner_product} to be 
rewritten as
\begin{equation}\label{eq:inner_product_spherical_coordinates}
\langle f, g \rangle_{\!\hspace*{0.5pt}H} \hspace*{0pt}=\hspace*{0pt} \!\int_{0}^{\infty} \! \Bigl\lbrace \hspace*{1pt}\int_{0}^{\pi} \! \int_{0}^{2\pi}\!\! f(r,\vartheta,\varphi) \hspace*{1pt} \overline{g(r,\vartheta,\varphi)} \hspace*{2pt} \mathrm{d}\varphi \hspace*{1pt} \sin\vartheta \hspace*{2pt} \mathrm{d}\vartheta \Bigr\rbrace \hspace*{1pt} r^{2} \hspace*{1pt} \mathrm{e}^{-r^{2\!}} \hspace*{1pt} \mathrm{d}r, \:~~ f, g \in H.
\end{equation}
Note that the integration range $[0,\pi] \times [0,2\pi)$ of the two inner integrals above can be identified with the unit sphere 
$\mathbb{S}^{2\!}$.

\begin{sloppypar}
The next step is a \emph{separation of variables}. In particular, \citeauthor{gto} make the product ansatz 
\begin{equation}\label{eq:product_ansatz}
p(\vec{x}) \hspace*{2pt}=\hspace*{2pt} R(r) \hspace*{1pt} S(\vartheta,\varphi), ~~~~~~ \vec{x} = (r,\vartheta,\varphi) \in \mathbb{R}^{3\!},
\end{equation}
for each orthogonal polynomial $p$ to be constructed. 
\end{sloppypar}

Of course, the radial part $R$ and the spherical part $S$ should be \emph{polynomial} on $[0,\infty)$ and $\mathbb{S}^{2\!}$ 
(by which we mean the restriction of a polynomial on $\mathbb{R}^{3\!}$ to $\mathbb{S}^{2}$), respectively. 
Furthermore, it is desirable that each two orthogonal polynomials $p_{j}$ and $p_{k}$ satisfy separate orthogonality relations 
with respect to the radius and on the sphere,
\begin{align}
\int_{0}^{\infty}\!\! R_{j}(r) \hspace*{0.5pt} \overline{R_{k}(r)} \hspace*{2pt} r^{2} \hspace*{1pt} \mathrm{e}^{-r^{2\!}} \hspace*{1pt} \mathrm{d}r \hspace*{2pt}&=\hspace*{2pt} \delta_{jk},\label{eq:O1}\\
\int_{0}^{\pi} \! \int_{0}^{2\pi}\!\! S_{j}(\vartheta,\varphi) \hspace*{1pt} \overline{S_{k}(\vartheta,\varphi)} \hspace*{2pt} \mathrm{d}\varphi \hspace*{1pt} \sin\vartheta \hspace*{2pt} \mathrm{d}\vartheta \hspace*{2pt}&=\hspace*{2pt} \delta_{jk},\label{eq:O2}
\end{align}
denoting by $\delta_{jk}$ the standard \emph{Kronecker symbol}, being $1$ if $j = k$ and $0$ otherwise. The  
property $\langle p_{j}, p_{k} \rangle_{H\!} = \delta_{jk}$, i.e., the orthonormality of the SGL basis functions, 
then follows by \eqref{eq:inner_product_spherical_coordinates}.

The above separation approach allows the radial part $R$ and the spherical part $S$ in \eqref{eq:product_ansatz} to be 
constructed almost independently from each other. We begin with the spherical part $S$, for which solely the spherical 
harmonics are required.

\begin{definition}\label{def:spherical_harmonics}
The \emph{spherical harmonic} of \emph{degree} $l \in \mathbb{N}_{0}$ and \emph{order} $m \in \{-l,\dots,l\}$ is defined as
\begin{equation}\label{eq:spherical_harmonics}
Y_{lm} : \mathbb{S}^{2\!} \to \mathbb{C}, ~~~~~~ Y_{lm} (\vartheta, \varphi) \hspace*{2pt}\coloneqq\hspace*{2pt} \sqrt{\frac{(2l+1)}{4\pi}\frac{(l-m)!}{(l+m)!}} \hspace*{1pt} P_{lm}(\cos \vartheta)\hspace*{1pt} \mathrm{e}^{\mathrm{i}m\varphi\!},
\end{equation}
where $P_{lm\!}$ denotes the \emph{associated Legendre polynomial} of degree $l$ and order $m$ \cite[Eqs.\ 8.6.6 and 8.6.18]{abramowitz}:
\begin{equation*}
P_{lm} : [-1,1] \to \mathbb{R}, ~~~~~~ P_{lm}(t) \hspace*{2pt}\coloneqq\hspace*{2pt} \frac{(-1)^{m}}{2^{l} l!} \hspace*{1pt} (1 - t^{2})^{m/2} \frac{\mathrm{d}^{l+m}}{\mathrm{d}t^{l+m}} \hspace*{1pt} (t^{2\!} - 1)^{l\!}. 
\end{equation*}
\end{definition}

The associated Legendre polynomials satisfy a \emph{three-term recurrence relation} \cite[Eq.\ 8.5.3]{abramowitz}:
\begin{align}\label{eq:legendre_recurrence}
(l + 1 - m) \hspace*{1pt} P_{l+1,m}(t) \hspace*{2pt}=\hspace*{4pt} &(2l + 1) \hspace*{1pt} t \hspace*{1pt} P_{lm}(t) \notag\\&-\hspace*{2pt} (l + m) \hspace*{1pt} P_{l-1,m}(t), ~~~~~ t \in [-1,1], ~~~ |m| \leq l \in \mathbb{N}.
\end{align}

\begin{sloppypar}
In our context, the most important properties of the spherical harmonics are the following; for a detailed introduction to the 
related theory, refer to \citep{freeden} or \citep{dai_xu}, for example.
\end{sloppypar}

\begin{theorem}\label{thm:spherical_harmonics}
The spherical harmonics constitute an orthonormal basis of the space $L^{2}(\mathbb{S}^{2})$ of square-integrable functions on 
the unit sphere \hspace*{1pt}$\mathbb{S}^{2\!}$, endowed with the standard inner product
\begin{equation}\label{eq:S2_inner_product}
\langle f, g \rangle_{\mathbb{S}^{2}} \hspace*{2pt}\coloneqq\hspace*{2pt}\! \int_{0}^{\pi} \! \int_{0}^{2\pi}\!\! f(\vartheta,\varphi) \hspace*{1pt} \overline{g(\vartheta,\varphi)} \hspace*{2pt} \mathrm{d}\varphi \hspace*{1pt} \sin\vartheta \hspace*{2pt} \mathrm{d}\vartheta, ~~~~~~ f,g \in L^{2}(\mathbb{S}^{2}).
\end{equation}
Furthermore, the spherical harmonics of degree at most $N$\!\hspace*{0.5pt} span the space of all (complex-valued) polynomials of 
(total) degree at most $N$\!\hspace*{0.5pt} on \hspace*{1pt}$\mathbb{S}^{2\!}$ ($N\! \in \mathbb{N}_{0}$).
\end{theorem}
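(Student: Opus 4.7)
The plan is to prove the two assertions separately, with the spanning statement feeding into the completeness statement.

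First I would verify orthonormality by direct computation. Writing
\[
\langle Y_{lm}, Y_{l'm'} \rangle_{\mathbb{S}^{2}}
= C_{lm}\,\overline{C_{l'm'}} \int_{0}^{\pi}\! P_{lm}(\cos\vartheta)\, P_{l'm'}(\cos\vartheta)\sin\vartheta\,\mathrm{d}\vartheta \int_{0}^{2\pi}\! \mathrm{e}^{\mathrm{i}(m - m')\varphi}\,\mathrm{d}\varphi,
\]
where $C_{lm}$ denotes the normalization constant in \eqref{eq:spherical_harmonics}, the $\varphi$-integral collapses to $2\pi\,\delta_{mm'}$. Substituting $t = \cos\vartheta$ reduces the $\vartheta$-integral to the classical orthogonality
\[
\int_{-1}^{1}\! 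P_{lm}(t)\, P_{l'm}(t)\,\mathrm{d}t \hspace*{2pt}=\hspace*{2pt} \frac{2}{2l+1}\,\frac{(l+m)!}{(l-m)!}\,\delta_{ll'},
\]
which I would derive from the Rodrigues representation in Definition \ref{def:spherical_harmonics} by repeated integration by parts (the boundary terms vanishing because of the $(1 - t^{2})^{m/2}$ factors). Multiplying everything out, the chosen normalization in \eqref{eq:spherical_harmonics} was precisely tuned so the product equals $\delta_{ll'}\delta_{mm'}$.

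Next I would prove the spanning statement by combining an algebraic decomposition with a dimension count. By the explicit formula, $r^{l}Y_{lm}(\vartheta,\varphi)$ is a homogeneous harmonic polynomial of degree $l$ on $\mathbb{R}^{3}$, and it is a standard fact that every homogeneous polynomial of degree $l$ on $\mathbb{R}^{3}$ decomposes uniquely as $\sum_{k \geq 0} |\vec{x}|^{2k}\, h_{l-2k}$ with $h_{l-2k}$ harmonic of degree $l-2k$. Restricting to $\mathbb{S}^{2}$ where $|\vec{x}|^{2} = 1$, this shows every polynomial of total degree at most $N$ on $\mathbb{S}^{2}$ is a linear combination of spherical harmonics of degrees at most $N$. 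Equality of dimensions then follows from $\sum_{l=0}^{N}(2l+1) = (N+1)^{2}$, which matches the known dimension of the space of polynomials of degree at most $N$ restricted to $\mathbb{S}^{2}$.

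Finally, for completeness I would chain three density statements: continuous functions are dense in $L^{2}(\mathbb{S}^{2})$ (standard measure-theoretic fact, $\mathbb{S}^{2}$ being compact Hausdorff with a finite Borel measure); the Stone--Weierstrass theorem applied to the real algebra generated by the coordinate functions $x,y,z$ shows polynomials restricted to $\mathbb{S}^{2}$ are uniformly dense in $C(\mathbb{S}^{2})$, hence dense in $L^{2}(\mathbb{S}^{2})$; and by the spanning result just established, these polynomials are already captured by finite linear combinations of spherical harmonics. Together with orthonormality, this yields that the $Y_{lm}$ form an orthonormal basis.

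The main obstacle I anticipate is the Legendre orthogonality computation with correct normalization, since tracking the factorials through integration by parts on the Rodrigues formula is delicate; the harmonic-polynomial decomposition used in the spanning step, though conceptually clean, also requires some care to state carefully for polynomials of mixed degree (which is handled by splitting into homogeneous components first).
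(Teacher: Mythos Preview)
Your proof plan is correct and follows the standard route to this classical result: separate the $\varphi$- and $\vartheta$-integrals to reduce orthonormality to the associated Legendre orthogonality relation, use the harmonic decomposition of homogeneous polynomials together with a dimension count for the spanning claim, and then chain Stone--Weierstrass with the spanning result to obtain completeness. The places you flag as delicate (tracking factorials in the Rodrigues computation, and handling mixed-degree polynomials by first splitting into homogeneous components) are indeed the only points requiring care, and you have identified them correctly.

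That said, the paper does not actually prove this theorem. It is stated as background and the reader is referred to standard references such as \citep{freeden} and \citep{dai_xu} for the details. So your proposal goes well beyond what the paper itself supplies; there is no ``paper's own proof'' to compare against here, only the citation of external sources. Your argument is precisely the kind of proof one would find in those references.
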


With this knowledge, it is clear that the spherical harmonics are a good choice for the spherical part $S$ in \eqref{eq:product_ansatz}; 
the orthogonality relation \eqref{eq:O2} is thus satisfied (compare with \eqref{eq:S2_inner_product}).

In a next step, the spherical harmonics are extended radially in order to 
regain polynomials on $\mathbb{R}^{3\!}$. To this end, \citeauthor{gto} borrow the following result from the theory of orthogonal 
polynomials in the univariate setting:

\begin{theorem}[\textbf{\cite[Sect.\ 5.1]{szego}}]\label{thm:generalized_laguerre}
For every fixed real number $\alpha > -1$, there exists exactly one set of polynomials on the positive half-line $[0,\infty)$ 
satisfying the orthogonality relation
\begin{equation}\label{eq:laguerre_ortho}
\int_{0}^{\infty} \!\! L_{j}^{(\alpha)\!}(t) \hspace*{1pt} L_{k}^{(\alpha)\!}(t) \hspace*{1.5pt} t^{\alpha} \hspace*{0pt} \mathrm{e}^{-t} \hspace*{1pt} \mathrm{d}t
\hspace*{2pt}=\hspace*{2pt}
\frac{\Gamma(k + \alpha + 1)}{k!} \hspace*{1pt} \delta_{jk}, ~~~~~~ j,k \in \mathbb{N}_{0},
\end{equation}
where $\Gamma\!$ denotes the \emph{gamma function}. 
\end{theorem}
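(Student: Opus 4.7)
The plan is to build the polynomials by Gram--Schmidt orthogonalization applied to the monomial basis $\{1, t, t^{2}, \ldots\}$ with respect to the weighted inner product
\[
\langle f, g \rangle_{\alpha} \hspace*{2pt}\coloneqq\hspace*{2pt} \int_{0}^{\infty}\! f(t)\hspace*{1pt} g(t)\hspace*{1pt} t^{\alpha}\hspace*{1pt} \mathrm{e}^{-t}\hspace*{1pt} \mathrm{d}t.
\]
First I would verify that $\langle \cdot, \cdot \rangle_{\alpha}$ is actually well defined on polynomials. The key estimate is the moment computation $\int_{0}^{\infty} t^{k+\alpha}\hspace*{1pt} \mathrm{e}^{-t}\hspace*{1pt}\mathrm{d}t = \Gamma(k+\alpha+1)$, which is finite for every $k \in \mathbb{N}_{0}$ precisely when $\alpha > -1$. (Near $0$, one needs $k + \alpha > -1$; near $\infty$, the exponential dominates any polynomial.) This is exactly where the hypothesis $\alpha > -1$ is used.

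Next I would apply Gram--Schmidt to the linearly independent system $\{t^{k}\}_{k \geq 0}$ to obtain a sequence $p_{0}, p_{1}, \ldots$ of pairwise orthogonal polynomials with $\deg p_{k} = k$. To pin down the correct normalization, I would rescale each $p_{k}$ so that its leading coefficient is $(-1)^{k}/k!$, which is the standard convention for $L_{k}^{(\alpha)\!}$; up to this sign convention, the Gram--Schmidt output is determined uniquely by the requirement that $L_{k}^{(\alpha)\!}$ be orthogonal in $\langle \cdot, \cdot \rangle_{\alpha}$ to every polynomial of degree $< k$, since the orthogonal complement inside the $(k+1)$-dimensional space of polynomials of degree $\leq k$ is one-dimensional.

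To establish the explicit norm $\Gamma(k+\alpha+1)/k!$, the cleanest route is via the Rodrigues-type identity
\[
L_{k}^{(\alpha)\!}(t) \hspace*{2pt}=\hspace*{2pt} \frac{t^{-\alpha}\hspace*{1pt} \mathrm{e}^{t}}{k!} \hspace*{1pt} \frac{\mathrm{d}^{k}}{\mathrm{d}t^{k}}\!\bigl(t^{k+\alpha}\hspace*{1pt} \mathrm{e}^{-t}\bigr),
\]
which one checks defines polynomials of the correct degree and leading coefficient, and which satisfies the orthogonality \eqref{eq:laguerre_ortho} after $k$ integrations by parts against any lower-degree polynomial (the boundary terms vanish because the factor $t^{k+\alpha}\hspace*{1pt}\mathrm{e}^{-t}$ and its first $k-1$ derivatives all kill both endpoints when $\alpha > -1$). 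Evaluating $\langle L_{k}^{(\alpha)\!}, L_{k}^{(\alpha)\!}\rangle_{\alpha}$ then reduces, by orthogonality to lower-degree monomials, to computing the leading-coefficient contribution, which after the integrations by parts yields $(1/k!)\hspace*{1pt}\Gamma(k+\alpha+1)$.

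The main obstacle I anticipate is bookkeeping in the Rodrigues integration by parts: tracking the sign $(-1)^{k}$, verifying that the boundary terms genuinely vanish at both $0$ and $\infty$ under the weakest assumption $\alpha > -1$, and reducing the resulting $k$-fold derivative against $t^{k}$ to the single moment $\Gamma(k+\alpha+1)$. Uniqueness is then nearly automatic: any competing polynomial sequence $\tilde L_{k}$ with $\deg \tilde L_{k} = k$ satisfying \eqref{eq:laguerre_ortho} must have $\tilde L_{k} = c_{k}\hspace*{1pt} L_{k}^{(\alpha)\!}$ by the one-dimensionality argument above, and the prescribed norm together with the leading-coefficient convention forces $c_{k} = 1$.
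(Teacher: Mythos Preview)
The paper does not prove this theorem; it is quoted as a classical fact with a citation to Szeg\H{o}'s monograph and no argument is given. Your Gram--Schmidt construction for existence, combined with the Rodrigues formula and integration by parts for the explicit norm, is exactly the standard route and is correct.

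One small point on uniqueness: the orthogonality relation \eqref{eq:laguerre_ortho} by itself determines each $L_{k}^{(\alpha)}$ only up to a sign, since replacing $L_{k}^{(\alpha)}$ by $-L_{k}^{(\alpha)}$ leaves the diagonal entries $\Gamma(k+\alpha+1)/k!$ unchanged. Your argument that the norm ``forces $c_{k}=1$'' actually only yields $c_{k}^{2}=1$; it is the leading-coefficient convention $(-1)^{k}/k!$ that you impose separately which fixes the sign. This convention is not part of the theorem as stated in the paper, so strictly speaking the phrase ``exactly one set'' is slightly imprecise there too --- Szeg\H{o} resolves it the same way you do, via the explicit formula \eqref{eq:associated_laguerre_explicit}.
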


These polynomials are called \emph{generalized} (or \emph{associated}) \emph{Laguerre polynomials}. Each generalized Laguerre 
polynomial $L_{k}^{(\alpha)\!}$ is of degree $k$, and possesses the closed-form expression
\begin{equation}\label{eq:associated_laguerre_explicit}
L_{k}^{(\alpha)\!}(t) \hspace*{2pt}=\hspace*{2pt} \sum_{j=0}^{k} \frac{(-1)^{j}}{j!} {k + \alpha \choose k - j} \hspace*{1pt} t^{\hspace*{0.5pt}j\!}, ~~~~~~ t \in [0,\infty)
\end{equation}
\cite[Eq.\ 5.1.6]{szego}. As the associated Legendre polynomials, the generalized Laguerre polynomials satisfy a three-term 
recurrence relation \cite[Eq.\ 5.1.10]{szego}:
\begin{align}\label{eq:laguerre_recursion}
(k + 1) \hspace*{1pt} L_{k+1}^{(\alpha)}\!\hspace*{1pt}(t) 
\hspace*{2pt}=\hspace*{4pt}
&(2k + \alpha + 1 - t) \hspace*{1pt} L_{k}^{(\alpha)}\!\hspace*{1pt}(t) \notag\\ &-\hspace*{2pt} (k + \alpha) \hspace*{1pt} L_{k-1}^{(\alpha)}\!\hspace*{1pt}(t), ~~~~~~~~ t \in [0,\infty), ~~~ k \in \mathbb{N}.
\end{align}

Inspired by the solution to Schr\"{o}dinger's equation for the hydrogen atom (cf.\ \cite[Sect.\ 7.4]{hydrogen}), 
\citeauthor{gto} now make the ansatz
\begin{equation*}
R(r) \hspace*{2pt}=\hspace*{2pt} R_{k}^{(\alpha)\!}(r) \hspace*{2pt}\coloneqq\hspace*{2pt} N_{k}^{(\alpha)\!} r^{\alpha} L_{k}^{(\alpha + 1/2)\!}(r^{2}), ~~~ \alpha > -1, ~~~ k \in \mathbb{N}_{0}, 
\end{equation*}
for the radial part $R$ in \eqref{eq:product_ansatz}. By setting $\alpha \coloneqq l$, where $l$ is the order of the spherical harmonic 
$Y_{lm}$ to be extended, 
and substituting $r^{2\!}$\hspace*{1pt} for $t$ in \eqref{eq:laguerre_ortho}, this 
ansatz results in the orthogonality relation
\begin{equation}\label{eq:laguerre_ortho_transformed}
\int_{0}^{\infty} \!\! R_{j}^{(l)\!}(r) \hspace*{1pt} R_{k}^{(l)\!}(r) \hspace*{1.5pt} r^{2} \hspace*{1pt} \mathrm{e}^{-r^{2\!}} \hspace*{1pt} \mathrm{d}r
\hspace*{2pt}=\hspace*{2pt}
\frac{\Gamma(k + l + 3/2)}{2 k!} \hspace*{1pt} \Bigl\lbrace N_{k}^{(l)\!}\Bigr\rbrace^{\!2\!} \hspace*{1pt} \delta_{jk}, ~~~~~~ j,k \in \mathbb{N}_{0}.
\end{equation}
This immediately entails setting
\begin{equation*}
N_{k}^{(l)} \hspace*{2pt}\coloneqq\hspace*{2pt} \sqrt{\frac{2k!}{\Gamma(k + l + 3/2)}},
\end{equation*}
so that the orthogonality relation \eqref{eq:O1} is satisfied. Observe that the polynomials $R_{k}^{(l)\!}$
are real.

At this point, it is important to note that one can not expect to obtain a polynomial on $\mathbb{R}^{3\!}$ by extending a spherical 
harmonic $Y_{lm}$ by an arbitrary polynomial in $r$. However, the above combinations of $R_{k}^{(l)\!}$ and $Y_{lm}$ are, in fact, 
polynomials of (total) degree $2k + l$ on $\mathbb{R}^{3\!}$. This is due to the fact that $r^{l} Y_{lm}$ is a polynomial of degree 
$l$ on $\mathbb{R}^{3\!}$, while $L_{k}^{(l + 1/2)\!}(r^{2})$ is a polynomial of degree $2k$ on $\mathbb{R}^{3\!}$. By some further 
working with the closed-form expression \eqref{eq:associated_laguerre_explicit} of the generalized Laguerre polynomials, \citeauthor{gto} 
found that by setting $k \coloneqq n - l - 1$, $n > l$, the arising combinations of $R_{nl} \coloneqq R_{n-l-1\!}^{(l)}$ and $Y_{lm}$ 
actually span the space of polynomials on $\mathbb{R}^{3\!}$. This establishes the final form of the SGL basis functions $H_{nlm}$ of 
Definition \ref{def:sgl}. The notion `basis functions' is justified by Theorem \ref{thm:density}.

Finally, note that the degree of the SGL basis functions is not to be confused with their polynomial degree: The SGL basis function 
$H_{nlm}$ is of degree $n$ in the sense of Definition \ref{def:sgl}, but of polynomial degree $2n - l - 2$. 

\section{Fast Fourier transforms for SGL basis functions}\label{sec:sglft}
In this section, we develop fast Fourier transforms for the SGL basis functions of Definition \ref{def:sgl}. 
To this end, we first derive an SGL sampling theorem for \emph{bandlimited functions}. For a fixed 
\emph{bandlimit} $B \!\hspace*{1pt}\in \mathbb{N}$, these are functions $f \in H$ for which $\hat{f}_{nlm\!} 
= 0$ if $n > B$. By construction of the SGL basis functions, with increasing bandlimit $B$, these  
spaces exhaust the entire class of polynomials on $\mathbb{R}^{3}$; recall, however, that these spaces do not 
coincide with the classical polynomial spaces of $\mathbb{R}^{3\!}$. This is why we introduce a different 
notion here. The SGL sampling theorem enables us to compute the SGL Fourier coefficients of such bandlimited 
functions in a \emph{discrete} way, that is, with a finite number of computation steps. This immediately results 
in a first discrete SGL Fourier transform and corresponding inverse. By a separation-of-variables 
technique and the employment of a fast spherical Fourier transform, we then unveil a whole class of fast SGL 
Fourier transforms and inverses. We close this section by an linear-algebraic description and comparison 
of our transforms.

\subsection{SGL sampling theorem}\label{subsec:SGL_sampling_theorem}

To derive an SGL sampling theorem, we make use of two auxiliary results: an \emph{equiangular quadrature 
rule} for the unit sphere $\mathbb{S}^{2\!}$, which is a classical construct of \cite{sfft}, and a \emph{Gauss-Hermite 
quadrature rule} for the positive half line $[0,\infty)$. We begin with the former.

\begin{theorem}[\textbf{\citet[Theorem 3]{sfft}}]\label{thm:S2_sampling_theorem}
Let $g$ be a polynomial of degree $L-1$ on \hspace*{1pt}$\mathbb{S}^{2\!}$, i.e., 
$g \in \textnormal{span}\lbrace Y_{lm\!} : |m| \leq l < L \rbrace$, $L \in \mathbb{N}$.
Then the spherical Fourier coefficients of $g$ obey the quadrature rule
\begin{equation}\label{eq:S2_sampling_theorem}
\langle g, Y_{lm} \rangle_{\mathbb{S}^{2}} \hspace*{2pt}=\hspace*{2pt}\! \sum_{j,k = 0}^{2L - 1} b_{\!j} \hspace*{1pt} g(\vartheta_{j}, \varphi_{k}) \hspace*{1pt} \overline{Y_{lm}(\vartheta_{j}, \varphi_{k})}, ~~~~~~ |m| \leq l < L,
\end{equation}
where the sampling angles are defined as $\vartheta_{j} \coloneqq (2j + 1)\pi / 4L$ and $\varphi_{k} \coloneqq k\pi / L$, 
resulting in the closed-form expression
\begin{equation*}
b_{\!j} \hspace*{2pt}=\hspace*{2pt} \sin\Bigl(\!(2j + 1) \frac{\pi}{4L}\Bigr) \frac{2}{L} \sum_{l = 0}^{L - 1} \frac{1}{2l + 1} \hspace*{0pt} \sin\Bigl(\!(2j + 1)(2l + 1) \frac{\pi}{4L}\Bigr)
\end{equation*}
for the quadrature weights.
\end{theorem}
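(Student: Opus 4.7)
The plan is to reduce the quadrature identity \eqref{eq:S2_sampling_theorem} to an exactness statement on a finite-dimensional basis, and then separate the variables $\vartheta$ and $\varphi$, handling each integration by its own one-dimensional exact quadrature. By linearity, it suffices to verify \eqref{eq:S2_sampling_theorem} in the case $g = Y_{l'm'\!}$ with $|m'|\leq l' < L$; that is, to show
\begin{equation*}
\int_{0}^{\pi}\!\int_{0}^{2\pi}\! Y_{l'm'}(\vartheta,\varphi)\,\overline{Y_{lm}(\vartheta,\varphi)}\,\mathrm{d}\varphi\,\sin\vartheta\,\mathrm{d}\vartheta \hspace*{2pt}=\hspace*{2pt} \sum_{j,k=0}^{2L-1} b_{\!j}\, Y_{l'm'}(\vartheta_{j},\varphi_{k})\,\overline{Y_{lm}(\vartheta_{j},\varphi_{k})}
\end{equation*}
for all admissible index pairs. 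Using the explicit product form of the spherical harmonics in Definition \ref{def:spherical_harmonics}, the integrand factorizes into a $\vartheta$-part involving associated Legendre polynomials and a $\varphi$-part of the form $\mathrm{e}^{\mathrm{i}(m'-m)\varphi}$.

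First, I would dispose of the azimuthal integral. On the equiangular grid $\varphi_{k} = k\pi/L$, the discrete orthogonality
\begin{equation*}
\sum_{k=0}^{2L-1}\mathrm{e}^{\mathrm{i}(m'-m)\varphi_{k}} \hspace*{2pt}=\hspace*{2pt} 2L\,\delta_{m',m}
\end{equation*}
holds whenever $|m'-m| < 2L$, which is ensured by $|m'|,|m| < L$. This exactly reproduces the continuous $\varphi$-integral (which itself equals $2\pi\,\delta_{m',m}$) up to the overall factor $\pi/L$ and collapses the identity to the case $m' = m$.

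The main obstacle is then the polar step: showing that the univariate rule with nodes $\vartheta_{j} = (2j+1)\pi/4L$ and the stated weights $b_{\!j}$ exactly integrates $P_{l'm}(\cos\vartheta)\,P_{lm}(\cos\vartheta)\sin\vartheta$ for all $|m|\leq l,l' < L$. I would rewrite this integrand as a trigonometric polynomial in $\vartheta$ of degree at most $2L-1$, using the fact that $P_{lm}(\cos\vartheta)$ has a finite cosine-sine expansion in $\vartheta$ whose degree is controlled by $l$. The task thus reduces to producing weights $b_{\!j}$ for which the sum $\sum_{j=0}^{2L-1} b_{\!j}\,T(\vartheta_{j})$ reproduces $\int_{0}^{\pi} T(\vartheta)\,\mathrm{d}\vartheta$ on trigonometric polynomials $T$ of degree below $2L$.

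Finally, to pin down the closed-form expression of $b_{\!j}$, I would invoke the reproducing-kernel approach standard in Driscoll and Healy's argument: the weight $b_{\!j}$ is obtained by integrating against the trigonometric Lagrange interpolant at the node $\vartheta_{j}$. Expanding the Dirichlet-like kernel on the shifted grid and using the symmetry $\vartheta_{j}\mapsto\pi-\vartheta_{2L-1-j}$ to eliminate the even-order cosine contributions, the kernel simplifies to a finite sine series in odd harmonics, yielding
\begin{equation*}
b_{\!j} \hspace*{2pt}=\hspace*{2pt} \frac{2}{L}\sin\vartheta_{j}\sum_{l=0}^{L-1}\frac{1}{2l+1}\sin\bigl((2l+1)\vartheta_{j}\bigr),
\end{equation*}
matching the claim. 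The only delicate bookkeeping is to verify that degrees never exceed $2L-1$ so that the discrete orthogonality of the cosine/sine system on the shifted equiangular grid applies; this is ensured by the hypothesis $l,l' < L$.
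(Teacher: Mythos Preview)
The paper does not supply its own proof of this statement: Theorem~\ref{thm:S2_sampling_theorem} is quoted verbatim from \citet[Theorem~3]{sfft} and is used as a black box in the derivation of the SGL sampling theorem. There is therefore nothing in the paper to compare your argument against.

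That said, your outline is essentially the original Driscoll--Healy argument: linearity reduces to $g=Y_{l'm'}$, the azimuthal sum is handled by discrete orthogonality of the exponentials on the $2L$-point equiangular grid, and the polar problem becomes a one-dimensional quadrature for polynomials in $\cos\vartheta$ of degree at most $2L-2$ against the weight $\sin\vartheta$. One small caution: your phrasing ``reproduces $\int_{0}^{\pi}T(\vartheta)\,\mathrm{d}\vartheta$ on trigonometric polynomials $T$ of degree below $2L$'' is a bit broader than what is actually needed or than what the weights $b_{\!j}$ deliver. The relevant integrands $P_{l'm}(\cos\vartheta)P_{lm}(\cos\vartheta)\sin\vartheta$ are always of the form $p(\cos\vartheta)\sin\vartheta$ with $\deg p\leq 2L-2$, and the Driscoll--Healy weights are characterized precisely by the moment conditions $\sum_{j} b_{\!j}\,P_{l}(\cos\vartheta_{j}) = 2\,\delta_{l,0}$ for $0\leq l\leq 2L-1$; framing the polar step this way makes the derivation of the closed form for $b_{\!j}$ cleaner than appealing to a general trigonometric Lagrange interpolant.
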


\begin{figure}[t]
 \centering
 \includegraphics[width=\textwidth]{./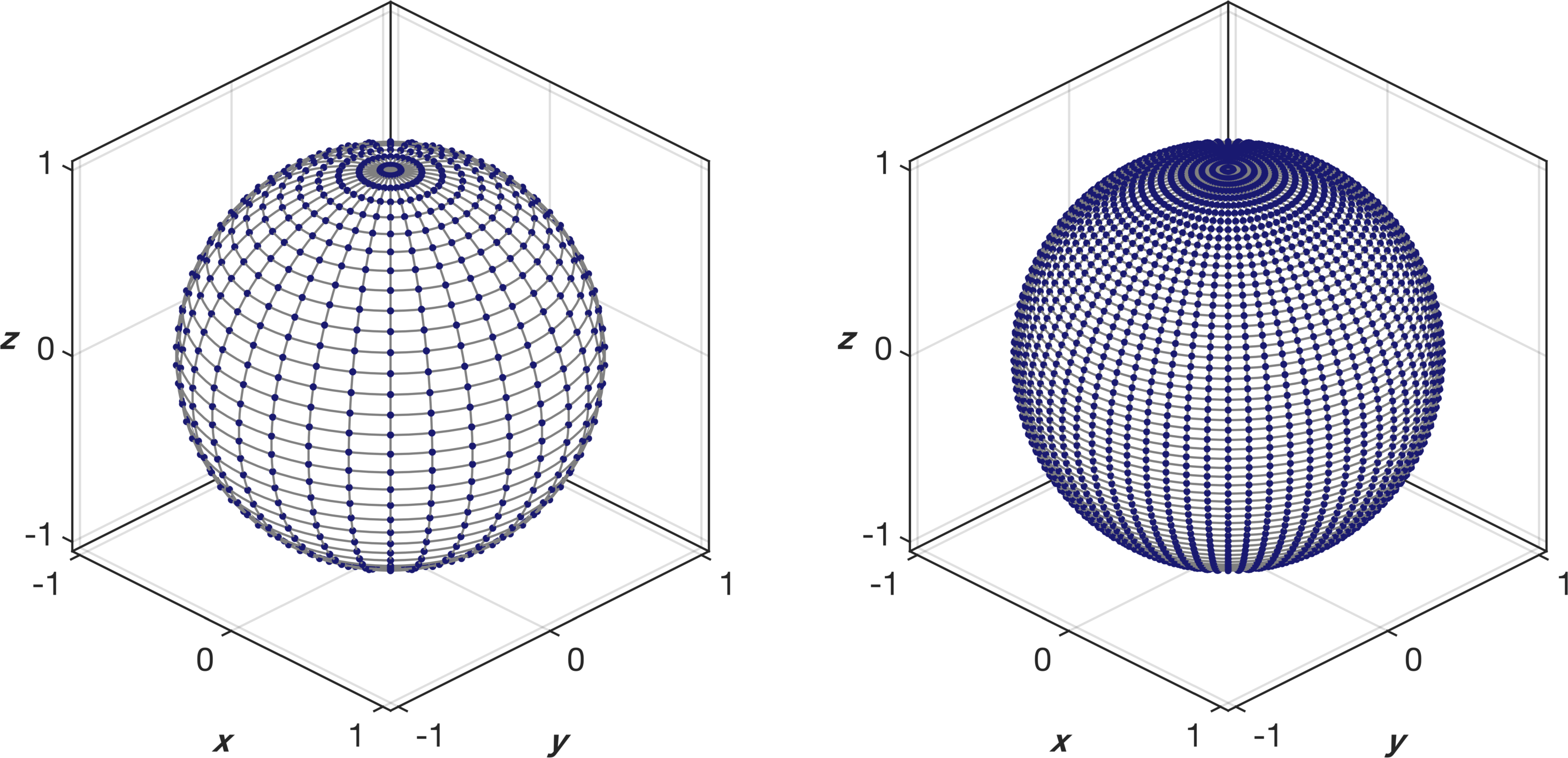}\vspace*{5pt}
 \caption{Sampling angles $(\vartheta_{j}, \varphi_{k})$, plotted as points on the unit sphere $\mathbb{S}^{2\!}$, for 
 \textbf{(left)} $L = 16$ and \textbf{(right)} $L = 32$. Note that the sampling angles 
 are denser near the poles than near the equator.}
 \label{fig:S2_sampling_angles}
\end{figure}

\begin{figure}[t]
 \centering
 \includegraphics[width=\textwidth]{./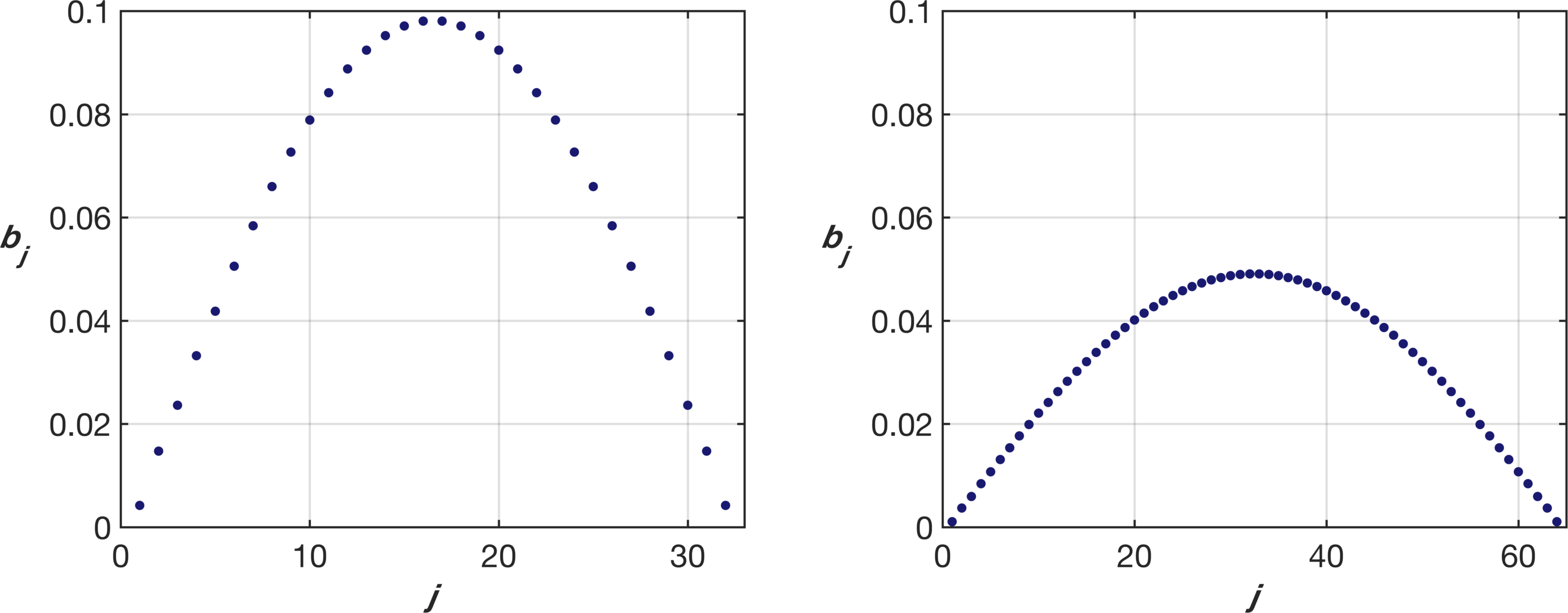}\vspace*{3pt}
 \caption{Spherical quadrature weights $b_{\!j}$, plotted for \textbf{(left)} $L = 16$ and 
 \textbf{(right)} $L = 32$. Note how the weights compensate for the higher density of sampling 
 angles near the poles of \hspace*{1pt}$\mathbb{S}^{2\!}$ (cf.\ Figure \ref{fig:S2_sampling_angles}):\ the higher the density 
 of sampling angles gets, the smaller the corresponding weights become.}
 \label{fig:S2_weights}
\end{figure}

We call $L$ the \emph{order} of the respective spherical quadrature rule.
Note that the quadrature weights $b_{\!j}$ are real, and do not depend on the azimuthal sampling angles $\varphi_{k}$. 
This is due to the special choice of the sampling angles $\vartheta_{j}$ and $\varphi_{k}$. Figure \ref{fig:S2_sampling_angles} 
shows the sampling angles $\vartheta_{j}$ and $\varphi_{k}$ for the orders $L = 16$ and $L = 32$, respectively, plotted 
as points on the unit sphere $\mathbb{S}^{2\!}$. Figure \ref{fig:S2_weights} shows the corresponding quadrature weights 
$b_{\!j}$. 

As it turns out, the weights $b_{\!j}$ are \emph{positive}. Since we are not aware of a proof of this 
feature having been given in this context, we include a direct proof here.

\begin{lemma}\label{thm:gewichte}
The quadrature weights $b_{\!j}$ are positive.
\end{lemma}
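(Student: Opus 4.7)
The plan is to peel off the obviously positive prefactor $\sin(\vartheta_{j})$ and reduce the statement to the positivity of the remaining sum. Writing
$$b_{\!j} \hspace*{2pt}=\hspace*{2pt} \sin(\vartheta_{j}) \cdot \frac{2}{L} \hspace*{1pt} T_{L}(\vartheta_{j}), \qquad T_{L}(\vartheta) \coloneqq \sum_{l=0}^{L-1} \frac{\sin((2l+1)\vartheta)}{2l+1},$$
I observe that $\vartheta_{j} = (2j+1)\pi/(4L) \in (0,\pi)$ for every $j \in \{0,\dots,2L-1\}$, so $\sin(\vartheta_{j}) > 0$ and it suffices to show $T_{L}(\vartheta) > 0$ for all $\vartheta \in (0,\pi)$. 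This is the odd-frequency analogue of the classical Fej\'er--Jackson inequality, and it is slightly stronger than required (positivity on the whole open interval, not just at the sampling angles).

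The tool of choice is Abel summation (summation by parts). I would set $a_{l} \coloneqq 1/(2l+1)$, which is strictly positive and strictly decreasing in $l$, and $s_{l} \coloneqq \sum_{k=0}^{l} \sin((2k+1)\vartheta)$. The product-to-sum identity $2\sin\vartheta \sin((2k+1)\vartheta) = \cos(2k\vartheta) - \cos((2k+2)\vartheta)$ telescopes the partial sums to the compact form
$$s_{l} \hspace*{2pt}=\hspace*{2pt} \frac{\sin^{2}((l+1)\vartheta)}{\sin\vartheta},$$
which is manifestly non-negative on $(0,\pi)$.

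Summation by parts then rewrites
$$T_{L}(\vartheta) \hspace*{2pt}=\hspace*{2pt} \sum_{l=0}^{L-2} (a_{l} - a_{l+1}) \hspace*{1pt} s_{l} \hspace*{2pt}+\hspace*{2pt} a_{L-1} \hspace*{1pt} s_{L-1},$$
where each coefficient $a_{l} - a_{l+1} = 2/[(2l+1)(2l+3)]$ is positive and $a_{L-1} > 0$. Every summand is therefore non-negative, and the $l=0$ term alone contributes $(a_{0}-a_{1}) \hspace*{1pt} \sin\vartheta = (2/3)\sin\vartheta > 0$. This gives $T_{L}(\vartheta) > 0$ and hence $b_{\!j} > 0$.

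The only non-routine ingredient is the telescoped closed form for $s_{l}$: without an explicit non-negative expression for the partial sums, Abel summation alone cannot pin down the sign of $T_{L}$, since the individual terms $\sin((2l+1)\vartheta)/(2l+1)$ oscillate. Once this closed form is available, the remainder of the argument is purely mechanical.
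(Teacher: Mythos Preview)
Your proof is correct and takes a genuinely different route from the paper's. The paper first derives an integral representation
\[
\sum_{l=0}^{L-1}\frac{1}{2l+1}\sin\Bigl((2l+1)\frac{\gamma_j}{L}\Bigr)
=\frac{1}{4L}\int_{0}^{(j+1/2)\pi}\frac{\sin u}{\sin(u/2L)}\,\mathrm{d}u
\]
via complex exponentials and a geometric sum, and then establishes positivity of this integral by a four-way case distinction ($j<L$ versus $j\ge L$, $j$ even versus odd), exploiting monotonicity of $\sin(\cdot/2L)$ on the relevant subintervals. Your argument instead stays entirely on the discrete side: Abel summation against the telescoped partial sums $s_l=\sin^2((l+1)\vartheta)/\sin\vartheta$ expresses $T_L(\vartheta)$ as a non-negative combination of non-negative quantities, with strict positivity forced by the $l=0$ contribution (or, when $L=1$, by the single boundary term $a_0 s_0=\sin\vartheta$). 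This is shorter, avoids all case analysis, and actually proves more, namely $T_L(\vartheta)>0$ on the full interval $(0,\pi)$ rather than only at the nodes $\vartheta_j$. The paper's integral representation, on the other hand, is of some independent interest and connects the weights to a Dirichlet-type kernel, which your approach does not reveal.
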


\begin{proof}
Let $L \in \mathbb{N}$ be given. Firstly, we note that $0 < (2j + 1) \pi / 4L < \pi$ and thus $0 < \sin((2j + 1) \pi 
/ 4L)$ for $j = 0,\dots,2L - 1$. Set $\gamma_{j} \coloneqq (2j + 1) \pi / 4$, $j \in \lbrace 0,\dots,2L - 1 \rbrace$. 
We derive
\begin{align}
\sum_{l = 0}^{L - 1} \frac{1}{2l + 1} \sin\Bigl(\!(2l + 1) \frac{\gamma_{j}}{L}\Bigr)\!\notag
\hspace*{2pt}&=\hspace*{2pt} \mathfrak{Im} \sum_{l = 0}^{L - 1} \frac{1}{2l + 1} \hspace*{1.5pt} \mathrm{e}^{\mathrm{i}(2l + 1) \gamma_{j} / L}\notag\\
\hspace*{2pt}&=\hspace*{2pt} \mathfrak{Im} \sum_{l = 0}^{L - 1} \hspace*{1pt} \Bigl\lbrace \frac{\mathrm{i}}{L}\! \int_{0}^{\gamma_{j}}\! \mathrm{e}^{\mathrm{i} (2l + 1) t / L} \hspace*{1pt} \mathrm{d}t \hspace*{1.5pt}+\hspace*{0.5pt} \frac{1}{2l + 1} \Bigr\rbrace\notag\\
\hspace*{2pt}&=\hspace*{2pt} \frac{1}{L} \hspace*{2pt} \mathfrak{Re}\! \int_{0}^{\gamma_{j}}\! \mathrm{e}^{\mathrm{i} t / L} \sum_{l = 0}^{L - 1} \mathrm{e}^{\mathrm{i} 2 l t / L} \hspace*{1pt} \mathrm{d}t\notag\\
\hspace*{2pt}&=\hspace*{2pt} \frac{1}{L} \hspace*{2pt} \mathfrak{Re}\! \int_{0}^{\gamma_{j}}\!\! \frac{\mathrm{e}^{2 \mathrm{i} t} - 1}{\mathrm{e}^{\mathrm{i} t / L\!} - \mathrm{e}^{- \mathrm{i} t / L}} \hspace*{1pt} \mathrm{d}t\vphantom{\sum_{l = 0}^{L - 1}}\notag\\
\hspace*{2pt}&=\hspace*{2pt} \frac{1}{2L} \int_{0}^{\gamma_{j}}\!\! \frac{\sin(2t)}{\sin(t/L)} \hspace*{1pt} \mathrm{d}t.\vphantom{\sum^{L - 1}_{l=0}}\label{eq:last_step}
\end{align}

Substituting $u/2$ for $t$ on the right-hand side of \eqref{eq:last_step}, we arrive at
\begin{equation}\label{eq:integraldarstellung}
\sum_{l = 0}^{L - 1} \frac{1}{2l + 1} \sin\Bigl(\!(2j + 1)(2l + 1) \frac{\pi}{4L}\Bigr) \hspace*{2pt}=\hspace*{2pt} \frac{1}{4L} \int_{0}^{(j + 1/2)\pi}\!\!\!\! \frac{\sin u}{\sin(u/2L)} \hspace*{1pt} \mathrm{d}u.
\end{equation}

To show the positivity of the right-hand side of \eqref{eq:integraldarstellung}, we distinguish between
four different cases: $j < L$ or $j \geq L$, $j$ being even or uneven, respectively. The reason for the first distinction 
is that the denominator $\sin(\cdot/2L)$ is strictly increasing on the interval $[0,L\pi)$ and strictly decreasing on 
the interval $(L\pi,2L\pi]$. Furthermore, $\sin(\cdot/2L)$ is non-negative on the integration range $[0,(j+1/2)\pi] \subset 
[0,2L\pi]$ which allows all cases to be treated in a straightforward manner.

Let now $j < L$ and set $\kappa_{j} \coloneqq 0$ if $j$ is even and $\kappa_{j} \coloneqq 1$ if $j$ is odd. Two 
simple estimations reveal
\begin{align*}
\int_{0}^{(j + 1/2)\pi}\!\!\!\! \frac{\sin u}{\sin(u/2L)} \hspace*{1pt} \mathrm{d}u
\hspace*{2pt}&>\hspace*{2pt} \!\!\int_{0}^{(j + \kappa_{j})\pi}\!\!\!\! \frac{\sin u}{\sin(u/2L)} \hspace*{1pt} \mathrm{d}u \vphantom{\sum_{k = 0}}\\
\hspace*{2pt}&>\hspace*{2pt} \hspace*{-12pt}\sum_{k = 0}^{(j+\kappa_{j})/2 - 1} \hspace*{-3pt}\!\frac{1}{\sin((2k+1)\pi/2L)} \hspace*{1pt} \Bigl\lbrace \int_{2k\pi}^{(2k+1)\pi}\!\!\!  + \int_{(2k+1)\pi}^{2(k+1)\pi}\Bigr\rbrace \sin u \hspace*{2.5pt} \mathrm{d}u\\[4pt] 
\hspace*{2pt}&=\hspace*{2pt} 0.
\end{align*}
If, on the other hand, $j \geq L$, we make use of the identity
\begin{equation*}
\int_{0}^{(j + 1/2)\pi}\!\!\!\! \frac{\sin u}{\sin(u/2L)} \hspace*{1pt} \mathrm{d}u
\hspace*{2pt}=\hspace*{2pt} - \!\int_{0}^{(2L - j-1/2)\pi}\!\!\!\! \frac{\sin u}{\sin(u/2L)} \hspace*{1pt} \mathrm{d}u
\end{equation*}
and proceed in the same sense. \hfill $\square$
\end{proof}

In a next step towards our SGL sampling theorem, we introduce the \emph{half-range Gauss-Hermite quadrature}, i.e., a 
Gaussian quadrature rule for the Hermite weight $\exp(-r^{2})$ on the positive half line $[0, \infty)$. We 
add the term `half-range' here because the Hermite weight is usually considered on the entire real line 
$\mathbb{R}$, leading to other quadrature rules.

\begin{theorem}[\textbf{\cite[Sects. 3.2.2 \& 3.2.3]{numerical_analysis}}]\label{thm:half-range_Hermite}
Let $p$ be a polynomial of degree at most \hspace*{1pt}$2N - 1$, $N \!\in \mathbb{N}$. Furthermore, let 
\hspace*{0.25pt}$r_{0} \!< \!\cdots\! <\! r_{N - 1}$ denote the simple, positive roots of the $N$th orthogonal polynomial $p_{N}$ 
with respect to the weight function $\exp(-r^{2})$\! on $[0,\infty)$. Then equality holds in the Gaussian 
quadrature formula
\begin{equation}\label{eq:half-range_Hermite}
\int_{0}^{\infty}\!\! p(r) \hspace*{1pt} \mathrm{e}^{-r^{2\!}} \hspace*{1pt} \mathrm{d}r \hspace*{2pt}=\hspace*{2pt}\! \sum_{i = 0}^{N - 1} a_{i} \hspace*{1.5pt} p(r_{i}),
\end{equation}
where the quadrature weights $a_{i}$ are real, positive, and satisfy the equation
\begin{equation*}
a_{i} \hspace*{2pt}=\hspace*{2pt}\! \int_{0}^{\infty}\!\!\! \frac{p_{N}(r)}{(r - r_{i}) \hspace*{1pt} p'_{N}(r_{i})} \hspace*{1.5pt} \mathrm{e}^{-r^{2\!}} \hspace*{1pt} \mathrm{d}r.
\end{equation*}
\end{theorem}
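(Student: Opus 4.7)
The plan is to follow the classical construction of Gaussian quadrature rules, adapted to the weight $w(r) = \exp(-r^2)$ on the half line $[0,\infty)$. Since $w$ is positive and all of its moments $\int_0^\infty r^k \mathrm{e}^{-r^2}\,\mathrm{d}r$ are finite, the Gram-Schmidt procedure applied to $1, r, r^2, \ldots$ produces a sequence $(p_N)_{N \in \mathbb{N}_0}$ of orthogonal polynomials on $[0,\infty)$, with $\deg p_N = N$, unique up to scaling. This is the same general framework as for classical Gauss-Legendre or Gauss-Hermite (on the full line), so everything below is really a routine specialization.

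First I would verify that for each $N \in \mathbb{N}$, the polynomial $p_N$ has $N$ pairwise distinct roots, all lying in the open interval $(0,\infty)$. The standard argument: let $0 < s_1 < \cdots < s_k$ denote the points in $(0,\infty)$ at which $p_N$ changes sign, and assume for contradiction that $k < N$. Then the polynomial $q(r) \coloneqq p_N(r) \prod_{i=1}^{k}(r - s_i)$ has constant sign on $(0,\infty)$, so $\int_0^\infty q(r)\,\mathrm{e}^{-r^2}\,\mathrm{d}r \neq 0$; on the other hand, $\deg \bigl(\prod_{i}(r-s_i)\bigr) = k < N$ forces this integral to vanish by orthogonality of $p_N$ to all polynomials of lower degree. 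This contradiction yields $k = N$, so the roots $r_0 < \cdots < r_{N-1}$ are simple and contained in $(0,\infty)$.

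Next I would derive the quadrature identity \eqref{eq:half-range_Hermite}. Given $p$ of degree at most $2N - 1$, polynomial division gives $p = q\,p_N + s$ with $\deg q, \deg s \leq N - 1$. Integrating against $\mathrm{e}^{-r^2}$ and using orthogonality of $p_N$ to $q$ eliminates the first term, leaving $\int_0^\infty p(r)\,\mathrm{e}^{-r^2}\,\mathrm{d}r = \int_0^\infty s(r)\,\mathrm{e}^{-r^2}\,\mathrm{d}r$. Since $s$ has degree $\leq N - 1$ and agrees with $p$ at the $N$ nodes $r_i$ (where $p_N$ vanishes), Lagrange interpolation through $(r_0,\ldots,r_{N-1})$ recovers $s$ exactly: $s(r) = \sum_{i=0}^{N-1} p(r_i)\,\ell_i(r)$, with Lagrange basis polynomials $\ell_i(r) = p_N(r)/\bigl((r - r_i)\,p'_N(r_i)\bigr)$. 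Setting $a_i \coloneqq \int_0^\infty \ell_i(r)\,\mathrm{e}^{-r^2}\,\mathrm{d}r$ yields both \eqref{eq:half-range_Hermite} and the explicit formula stated. Reality of the $a_i$ is immediate from the reality of $p_N$ and of the integrand.

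The remaining point is positivity of the $a_i$. Here I would invoke the classical trick of applying the (already established) quadrature formula to the polynomial $\ell_i^2$, which has degree $2N - 2 \leq 2N - 1$. On the one hand, $\int_0^\infty \ell_i(r)^2\,\mathrm{e}^{-r^2}\,\mathrm{d}r > 0$ because the integrand is non-negative and not identically zero; on the other hand, $\ell_i(r_j) = \delta_{ij}$, so the quadrature sum collapses to $a_i$. Hence $a_i > 0$. The only mildly delicate step in the whole argument is the sign-change lemma ensuring the roots lie in $(0,\infty)$ and are simple; everything else is bookkeeping.
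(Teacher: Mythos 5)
Your argument is correct and complete: it is the standard construction of Gaussian quadrature (sign-change lemma for the roots, division by $p_N$ plus Lagrange interpolation for the nodes and weights, and positivity via applying the rule to $\ell_i^2$), specialized to the weight $\exp(-r^2)$ on $[0,\infty)$. The paper does not prove this theorem but cites it from a numerical analysis text, and the cited source's proof is exactly this classical argument, so there is nothing to add.
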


As in the spherical quadrature rules introduced in Theorem \ref{thm:S2_sampling_theorem}, we call $N$ the \emph{order} 
of the respective quadrature rule.

We do not want to go into detail regarding the numerical aspects of Theorem \ref{thm:half-range_Hermite}. 
We only mention that \citet[Sect.\ 2]{half_range_hermite} have developed special recurrence relations to compute 
the coefficients of the three-term recurrence relation satisfied by the orthogonal polynomials $p_{n}$, $n \in 
\mathbb{N}_{0}$. This, in turn, allows the sampling points $r_{i}$ in \eqref{eq:half-range_Hermite} to be computed by 
a standard approach (see \cite[Sect.\ 3.2.2, (v)]{numerical_analysis}). It is then also possible to compute the 
corresponding quadrature weights $a_{i}$ with desired precision by \cite[Eq.\ 2.1]{half_range_hermite}. This 
approach is used in the numerical experiments of the upcoming Section \ref{sec:experiments}. Figure 
\ref{fig:half_range_hermite} shows the sampling points $r_{i}$ and corresponding weights $a_{i}$ for 
the orders $N\! = 32$ and $N\! = 64$.

\begin{figure}[t]
 \centering
 \includegraphics[width=\textwidth]{./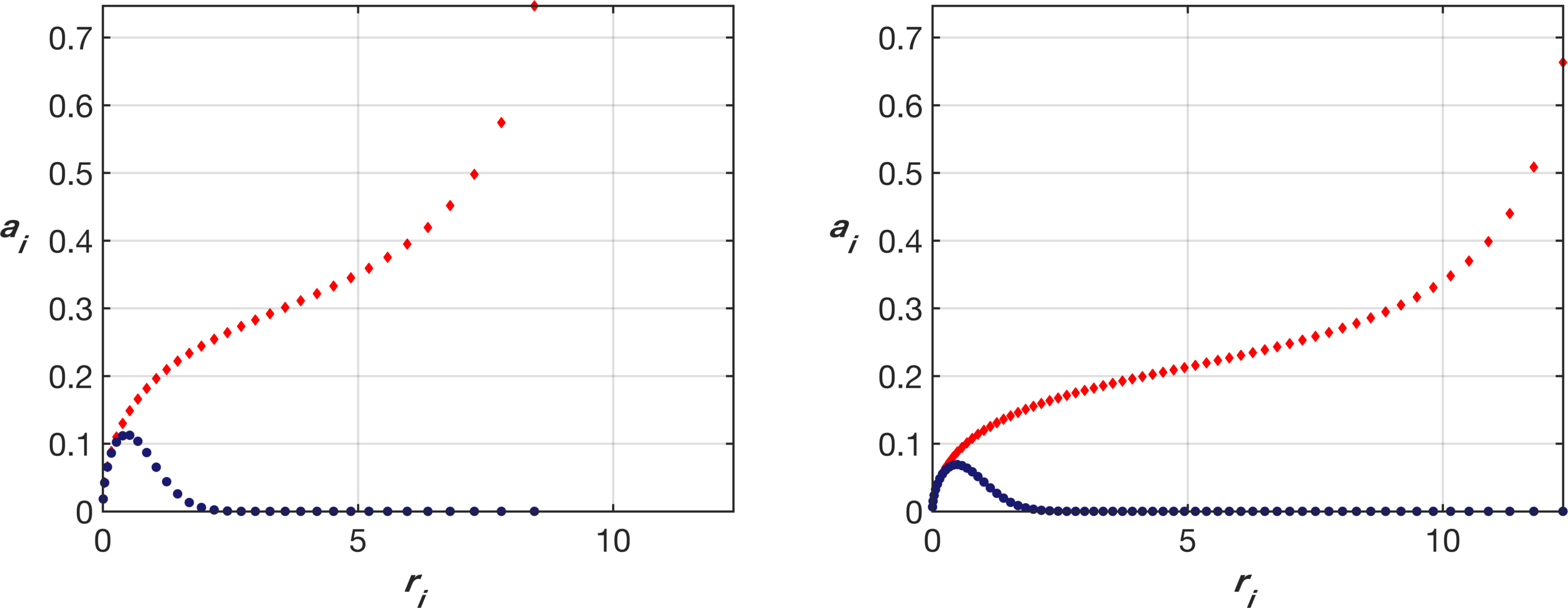}\vspace*{-2pt}
 \caption{Sampling points $r_{i}$, corresponding quadrature weights $a_{i\!}$ (\color{dark_blue}$\bullet$\color{black}), 
 and scaled weights $a_{i} \exp(r_{i}^{2})$ (\color{red}$\blacklozenge$\color{black}, cf.\ Section \ref{subsec:DSGLFT}) 
 of the half-range Gauss-Hermite quadrature rule of order \textbf{(left)} $N\! = 32$ and \textbf{(right)} $N\! = 64$. 
 The points $r_{i}$ are used as sampling radii which, combined with the sampling angles $\vartheta_{j}$ and $\varphi_{k}$ 
 shown in Figure \ref{fig:S2_sampling_angles}, constitute the sampling points of our SGL sampling theorem (Theorem 
 \ref{thm:SGL_sampling_theorem}) for the bandlimits (left) $B = 16$ and (right) $B = 32$.}\vspace*{-4pt}
 \label{fig:half_range_hermite}
\end{figure}

We now combine Theorems \ref{thm:S2_sampling_theorem} and \ref{thm:half-range_Hermite} to obtain our SGL 
sampling theorem. For this, let $f$ be bandlimited with bandlimit $B \in \mathbb{N}$. The function $f$ thus
possesses the unique SGL decomposition
\begin{equation}\label{eq:SGL_decomposition}
f \hspace*{2pt}=\hspace*{2.5pt}\! \sum_{n = 1}^{B} \hspace*{3.5pt} \sum_{l = 0}^{n - 1} \sum_{m = -l}^{l}\! \hat{f}_{nlm} \hspace*{1.5pt} H_{nlm}.
\end{equation}

Recalling that $H_{nlm}(r,\vartheta,\varphi) = N_{nl} \hspace*{1pt} R_{nl}(r) Y_{lm}(\vartheta,\varphi)$, we see 
that $f(r,\cdot,\cdot)$ is a linear combination of spherical harmonics of degree $l < B$ for every fixed $r \in 
[0,\infty)$. Hence, using the spherical quadrature rule of Theorem \ref{thm:S2_sampling_theorem} of order $L = B$, 
we get for $|m| \leq l < n \leq B$
\begin{align}
\hat{f}_{nlm} 
\hspace*{2pt}&=\hspace*{2pt} N_{nl\!} \int_{0}^{\infty}\! \Bigl\lbrace\hspace*{0pt}\int_{0}^{\pi} \! \int_{0}^{2\pi}\!\! f(r,\vartheta,\varphi) \hspace*{1pt} \overline{Y_{lm}(\vartheta,\varphi)} \hspace*{2pt} \mathrm{d}\varphi \hspace*{1pt} \sin\vartheta \hspace*{2pt} \mathrm{d}\vartheta \Bigr\rbrace R_{nl}(r) \hspace*{1pt} r^{2} \hspace*{1pt} \mathrm{e}^{-r^{2\!}} \hspace*{1pt} \mathrm{d}r\notag\\
\hspace*{2pt}&=\hspace*{2pt} N_{nl\!} \int_{0}^{\infty}\! \Bigl\lbrace\hspace*{0pt}\sum_{j,k = 0}^{2B - 1} b_{\!j} \hspace*{1pt} f(r,\vartheta_{j},\varphi_{k}) \hspace*{1pt} \overline{Y_{lm}(\vartheta_{j},\varphi_{k})} \Bigr\rbrace R_{nl}(r) \hspace*{1pt} r^{2} \hspace*{1pt} \mathrm{e}^{-r^{2\!}} \hspace*{1pt} \mathrm{d}r.\label{eq:zwischenstufe}
\end{align}

Considering again the SGL decomposition \eqref{eq:SGL_decomposition}, we verify that the integrand 
in \eqref{eq:zwischenstufe} is a polynomial in $r$ of degree at most $4B - 2$, multiplied by the Hermite weight. 
Therefore, using the half-range Gauss-Hermite quadrature rule of Theorem \ref{thm:half-range_Hermite} of order 
$N\! = 2B$, we obtain
\begin{align}
\int_{0}^{\infty}\! \Bigl\lbrace\hspace*{0pt}\sum_{j,k = 0}^{2B - 1}\! b_{\!j} \hspace*{1pt} &f(r,\vartheta_{j},\varphi_{k}) \hspace*{1pt} \overline{Y_{lm}(\vartheta_{j},\varphi_{k})} \Bigr\rbrace R_{nl}(r) \hspace*{1pt} r^{2} \hspace*{1pt} \mathrm{e}^{-r^{2\!}} \hspace*{0pt} \mathrm{d}r \hspace*{2pt}=\hspace*{2pt}\notag\\
\!&\sum_{i,j,k = 0}^{2B - 1}\! a_{i} \hspace*{1pt} r_{i}^{2} \hspace*{1pt} b_{\!j} \hspace*{1pt} f(r_{i},\vartheta_{j},\varphi_{k}) \hspace*{0pt} R_{nl}(r_{i}) \hspace*{0pt} \overline{Y_{lm}(\vartheta_{j},\varphi_{k})}.\label{eq:zwischenstufe_2}
\end{align}

\begin{sloppypar}
Combining \eqref{eq:zwischenstufe} and \eqref{eq:zwischenstufe_2} 
now yields our SGL sampling theorem:
\end{sloppypar}

\begin{theorem}[\textbf{SGL sampling theorem}]\label{thm:SGL_sampling_theorem}
Let $f$ be a bandlimited function with bandlimit $B \in \mathbb{N}$. Then the SGL Fourier coefficients of $f$ obey the 
quadrature rule
\begin{equation}\label{eq:SGL-Sampling-Theorem}
\hat{f}_{nlm} \hspace*{2pt}=\hspace*{2pt}\!\! \sum_{i,j,k = 0}^{2B - 1}\! a_{i} \hspace*{1pt} r_{i}^{2} \hspace*{1pt} b_{\!j} \hspace*{1pt} f(r_{i},\vartheta_{j},\varphi_{k}) \hspace*{1pt} \overline{H_{nlm}(r_{i}, \vartheta_{j},\varphi_{k})}, ~~~~~~|m| \leq l < n \leq B,
\end{equation}
where the sampling radii $r_{i}>0$ and weights $a_{i}>0$ are those of the half-range Gauss-Hermite quadrature 
rule of order $2B$ (Theorem \ref{thm:half-range_Hermite}), while the sampling angles $(\vartheta_{j}, \varphi_{k})$ 
and weights $b_{\!j}>0$ are those of the equiangular spherical quadrature rule of order $B$ (Theorem 
\ref{thm:S2_sampling_theorem}, Lemma \ref{thm:gewichte}).
\end{theorem}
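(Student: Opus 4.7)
The plan is to start from the spherical-coordinate form \eqref{eq:inner_product_spherical_coordinates} of the inner product and apply the two quadrature rules in succession, first over the angular variables, then over the radial variable. Concretely, I would write
\begin{equation*}
\hat{f}_{nlm} \;=\; N_{nl\!} \int_{0}^{\infty}\!\! \langle f(r,\cdot,\cdot), Y_{lm}\rangle_{\mathbb{S}^{2}} \, R_{nl}(r) \, r^{2} \, \mathrm{e}^{-r^{2\!}} \, \mathrm{d}r,
\end{equation*}
where for each fixed $r$ the slice $f(r,\cdot,\cdot)$ is treated as an element of $L^{2}(\mathbb{S}^{2})$. I would then invoke Fubini to justify this exchange, noting that bandlimitedness (via the finite SGL decomposition \eqref{eq:SGL_decomposition}) guarantees pointwise well-definedness.

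The crucial observation is that, because $f$ has bandlimit $B$, its decomposition \eqref{eq:SGL_decomposition} shows that for every fixed $r \in [0,\infty)$ the function $f(r,\cdot,\cdot)$ lies in $\mathrm{span}\{Y_{l'm'} : |m'| \leq l' < B\}$, i.e., is a spherical polynomial of degree at most $B-1$. Therefore Theorem \ref{thm:S2_sampling_theorem} applied with $L = B$ evaluates the inner integral exactly, yielding equation \eqref{eq:zwischenstufe}. This takes care of the angular part; the remaining task is to discretize the outer integral over $r$.

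The next step is to track the polynomial degree in $r$ of the resulting integrand. Using \eqref{eq:SGL_decomposition}, the spherically quadraturized expression reduces, by orthogonality of the $Y_{l'm'}$, to a linear combination of radial parts $R_{n'l}(r)$ with $n' \leq B$, each of degree at most $2n' - l - 2 \leq 2B - l - 2$ in $r$. Multiplying by the test radial part $R_{nl}(r) \cdot r^{2}$ of degree $2n - l \leq 2B$ yields a polynomial in $r$ of total degree at most $(2B - l - 2) + (2n - l) \leq 4B - 2$. Since the half-range Gauss-Hermite rule of order $N$ is exact for polynomials of degree up to $2N - 1$, choosing $N = 2B$ suffices, and Theorem \ref{thm:half-range_Hermite} gives \eqref{eq:zwischenstufe_2}. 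Assembling \eqref{eq:zwischenstufe} and \eqref{eq:zwischenstufe_2}, using $N_{nl\!}\, R_{nl}(r_{i})\, \overline{Y_{lm}(\vartheta_{j},\varphi_{k})} = \overline{H_{nlm}(r_{i}, \vartheta_{j},\varphi_{k})}$ (valid because $R_{nl}$ is real), produces \eqref{eq:SGL-Sampling-Theorem}. The positivity of the weights follows from Theorem \ref{thm:half-range_Hermite} for $a_{i}$ and from Lemma \ref{thm:gewichte} for $b_{j}$.

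The only subtle step is the degree count: one must confirm that the polynomial in $r$ arising after the spherical quadrature genuinely has degree bounded by $4B - 2$, so that $N = 2B$ is the correct order. The rest is essentially a two-level Fubini-plus-quadrature bookkeeping, already made explicit in the paragraphs preceding the theorem.
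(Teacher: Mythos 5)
Your proposal is correct and follows essentially the same route as the paper: apply the order-$B$ equiangular spherical rule to the inner integral (using that each slice $f(r,\cdot,\cdot)$ lies in $\mathrm{span}\{Y_{lm}: |m|\leq l<B\}$), then verify that the resulting radial integrand is a polynomial of degree at most $4B-2$ against the Hermite weight, so the order-$2B$ half-range Gauss-Hermite rule is exact. Your degree count $(2B-l-2)+(2n-l)\leq 4B-2$ matches the paper's bound, and the Fubini remark is a harmless extra justification the paper leaves implicit.
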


Note that the sampling angles shown in Figure \ref{fig:S2_sampling_angles} are radially extended by precisely the sampling 
points shown in Figure \ref{fig:half_range_hermite} to obtain the sampling points of Theorem \ref{thm:SGL_sampling_theorem}
for the bandlimits $B = 16$ and $B = 32$, respectively.

\subsection{Discrete SGL Fourier transforms}\label{subsec:DSGLFT}

Based on the results of the previous section, we are now able to give a rigorous definition of the term `discrete SGL 
Fourier transform'.

\begin{definition}[\textbf{DSGLFT/iDSGLFT}]
Let $B\! \in \mathbb{N}$. Any method for the computation of the SGL Fourier coefficients of bandlimited functions with 
bandlimit $B$ by means of \eqref{eq:SGL-Sampling-Theorem} is called a \emph{discrete SGL Fourier transform} 
(DSGLFT). Correspondingly, any method for reconstruction of function values of functions with bandlimited 
$B$ at the respective sampling nodes $(r_{i},\vartheta_{j},\varphi_{k})$ is referred to as an \emph{inverse 
discrete SGL Fourier transform} (iDSGLFT).
\end{definition}

Let $\tilde{a}_{i} \coloneqq a_{i} \exp(r_{i}^{2}) \hspace*{1pt} r_{i}^{2}$.
We state a simple DSGLFT as Algorithm \ref{alg:DSGLFT}. The SGL Fourier coefficients of a bandlimited 
function $f$ are here computed one after another, evaluating the corresponding triple sum every 
single time. We introduce the factor $\exp(r_{i}^{2})$ to compensate 
for the fast decay of the quadrature weights $a_{i}$ (cf.\ Figure \ref{fig:half_range_hermite}). This modification 
is accounted for by weighting the SGL basis function samples $H_{nlm}(r_{i},\vartheta_{j},\varphi_{k})$ by 
the factor $\exp(-r_{i}^{2})$ (see also Section \ref{sec:discussion}).

In this work, we use the standard complexity model in which a single operation is defined as a complex 
multiplication and a subsequent complex addition.
To state the asymptotic complexity of Algorithm \ref{alg:DSGLFT}, we make the assumption that the (modified) 
quadrature weights $\tilde{a}_{i\!}$ and $b_{\!j}$, as well as the sampling 
points $(r_{i},\vartheta_{j},\varphi_{k})$, are stored and readily available during runtime. Using the three-term 
recurrence relations \eqref{eq:legendre_recurrence} and \eqref{eq:laguerre_recursion} of the associated Legendre 
polynomials and the generalized Laguerre polynomials, we can evaluate any SGL basis function 
$H_{nlm}$ at an arbitrary sampling node $(r_{i},\vartheta_{j},\varphi_{k})$ in $\mathcal{O}(B)$ steps. 
Algorithm \ref{alg:DSGLFT} has, thus, an asymptotic complexity of $\mathcal{O}(B^{7})$: the total number of summands 
of the triple sum scales with $B^{3\!}$, just as the total number of iterations of the three `for' loops.

\begin{algorithm}[t]\vspace*{4pt}
 \KwData{Sample values $f(r_{i}, \vartheta_{j}, \varphi_{k})$; $i,j,k = 0,\dots,2B\!-\!1$, of a function $f$ with bandlimit $B\!\hspace*{1pt} \in \mathbb{N}$}\vspace*{3pt}
 \KwResult{SGL Fourier coefficients $\hat{f}_{nlm}$, $|m| \leq l < n \leq B$}\vspace*{3pt}
 \For{$n = 1$ \KwTo $B$}
 {
    \For{$l = 0$ \KwTo $n - 1$}
    {
       \For{$m = -\hspace*{1pt}l$ \KwTo $l$}
       {
          \vspace*{3pt}
          Compute\vspace*{3pt}
          $\hat{f}_{nlm} \hspace*{2pt}=\hspace*{0pt} \sum\limits_{i = 0}^{2B - 1} \hspace*{1pt} \sum\limits_{j = 0}^{2B - 1} \hspace*{1pt} \sum\limits_{k = 0}^{2B - 1}\! \big\lbrace \tilde{a}_{i} \hspace*{1pt} b_{\!j} \hspace*{1pt} f(r_{i}, \vartheta_{j}, \varphi_{k}) \big\rbrace \hspace*{1pt} \big\lbrace \overline{H_{nlm}(r_{i}, \vartheta_{j}, \varphi_{k})} \hspace*{1pt} \mathrm{e}^{-r_{i}^{2}} \big\rbrace$;
       }
    }
 }
 \caption{Naive DSGLFT}\label{alg:DSGLFT}
\end{algorithm}

We state a simple iDSGLFT as Algorithm \ref{alg:iDSGLFT}. The function values of a bandlimited function $f$
are here reconstructed at each sampling node $(r_{i},\vartheta_{j},\varphi_{k})$ by directly summing up the SGL basis 
function values $H_{nlm}(r_{i},\vartheta_{j},\varphi_{k})$, weighted by the respective SGL Fourier coefficient $\hat{f}_{nlm}$. 
Simple considerations show that this algorithm also has an asymptotic complexity of $\mathcal{O}(B^{7})$. 

\begin{algorithm}[t]\vspace*{4pt}
 \KwData{SGL Fourier coefficients $\hat{f}_{nlm}$, $|m| \leq l < n \leq B$, of a function $f$ with bandlimit $B\!\hspace*{1pt} \in \mathbb{N}$}\vspace*{3pt}
 \KwResult{Function values $f(r_{i}, \vartheta_{j}, \varphi_{k})$; $i,j,k = 0,\dots,2B-1$}\vspace*{3pt}
 \For{$i = 0$ \KwTo $2B - 1$}
 {
    \For{$j = 0$ \KwTo $2B - 1$}
    {
       \For{$k = 0$ \KwTo $2B - 1$}
       {
          \vspace*{3pt}
          Compute\vspace*{3pt}
          $f(r_{i}, \vartheta_{j}, \varphi_{k}) \hspace*{2pt}=\hspace*{0pt} \sum\limits_{n = 1}^{B} \hspace*{3.5pt} \sum\limits_{l = 0}^{n - 1} \sum\limits_{m = -l}^{l} \hat{f}_{nlm} \hspace*{1pt} H_{nlm}(r_{i}, \vartheta_{j}, \varphi_{k})$;
       }
    }
 }
 \caption{Naive iDSGLFT} \label{alg:iDSGLFT}
\end{algorithm}

\subsection{Fast SGL Fourier transforms}\label{subsec:FSGLFT}

At this point, we are naturally faced with the task to develop discrete SGL Fourier transforms and corresponding 
inverse transforms with an asymptotic complexity of less than $\mathcal{O}(B^{7})$. This motivates:

\begin{definition}[\textbf{FSGLFT/iFSGLFT}]
We call any DSGLFT (iDSGLFT) with an asymptotic complexity of less than $\mathcal{O}(B^{7})$ a \emph{fast (inverse) 
SGL Fourier transform}, abbreviated FSGLFT (iFSGLFT, respectively). 
\end{definition}

In this section, we design such fast transforms and, simultaneously, corresponding fast inverse transforms in three 
main steps: 1) We separate the above naive DSGLFT/iDSGLFT (Algorithm \ref{alg:DSGLFT} and \ref{alg:iDSGLFT}, 
respectively) into a radial and a spherical subtransform. 2) Subsequently,
we employ a fast spherical Fourier transform and inverse to reduce the complexity of the spherical subtransform. 
3) We intruduce our new discrete $R$ transform, a tool to compute the collection of sums
\begin{equation}\label{eq:DRT}
N_{nl}\! \sum_{i=0}^{2B - 1}\! a_{i} \hspace*{1pt} r_{i}^{2} \hspace*{1pt} R_{nl}(r_{i}) \hspace*{1pt} s_{i}
\hspace*{2pt}=\hspace*{0pt}\!
\sum_{i=0}^{2B - 1}\! \big\lbrace N_{nl} \hspace*{1pt} R_{nl}(r_{i}) \hspace*{1pt} \mathrm{e}^{-r_{i}^{2}} \big\rbrace \hspace*{1pt} \big\lbrace \tilde{a}_{i} \hspace*{1pt} s_{i} \big\rbrace, \:~~~~ n = l + 1,\dots, B,
\end{equation}
for a fixed $0 \leq l < B$, $[s_{i}]_{i = 0,\dots,2B-1}$ being an input vector of length $2B$, to reduce the
complexity of the radial subtransform. For this purpose, we also intruduce a corresponding inverse discrete 
$R$ transform. In the following, we consistently use the notation $[a_{\nu}]_{\nu = 0,\dots,N - 1}$ to denote 
a (column) vector of length $N\!\hspace*{0pt} \in \mathbb{N}$ with (complex-valued) elements $a_{\nu}$.

Let $B \in \mathbb{N}$ and a function $f$ with bandlimit $B$ be given. In a first step, we rearrange the triple 
sum in Algorithm \ref{alg:DSGLFT} to obtain
\begin{equation}\label{eq:SGL-Sampling-Theorem_rearranged}
\hat{f}_{nlm} \hspace*{0pt}=\hspace*{0pt}\! \sum_{i=0}^{2B - 1}\!\! \big\lbrace N_{nl} \hspace*{1pt} R_{nl}(r_{i}) \hspace*{1pt} \mathrm{e}^{-r_{i}^{2}} \big\rbrace \big\lbrace \tilde{a}_{i\!}\! \sum_{j,k = 0}^{2B - 1}\! b_{\!j} \hspace*{1pt} f(r_{i},\vartheta_{j},\varphi_{k}) \hspace*{1pt} \overline{Y_{lm}(\vartheta_{j},\varphi_{k})} \big\rbrace, ~~~ |m| \leq l < n \leq B.
\end{equation}

Note that even without a fast algorithm here, the above separation of variables allows the complexity 
of Algorithms \ref{alg:DSGLFT} and \ref{alg:iDSGLFT} to be reduced to $\mathcal{O}(B^{6})$ by a simple
rearrangement of the computation steps: Precomputation of the inner sum in \eqref{eq:SGL-Sampling-Theorem_rearranged} 
for all $|m| \leq l < B$ and $i = 0,\dots,2B-1$ can be done in $\mathcal{O}(B^{6})$ steps. Subsequent 
evaluation of the outer sum for all $|m| \leq l < n \leq B$ can be done in $\mathcal{O}(B^{5})$ steps. 
The costs for evaluating $R_{nl}$ and $Y_{lm}$ are taken into account as $\mathcal{O}(B)$, respectively. 
The computation steps of the inverse transform may be rearranged in the same sense.
We maintain this strategy, and optimize the substeps.

Since $f$ is bandlimited with bandlimit $B$, we conclude that $f(r_{i},\cdot,\cdot)$ is a polynomial of degree
at most $B-1$ on $\mathbb{S}^{2\!}$ for each $i$. By Theorem \ref{thm:spherical_harmonics}, 
this implies that $f(r_{i},\cdot,\cdot)\!\hspace*{0.5pt} \in \textnormal{span}\lbrace Y_{lm\!} : |m| \leq l < B \rbrace$ 
(we have already made use of this fundamental feature of bandlimited functions in the derivation of the SGL 
sampling theorem, Theorem \ref{thm:SGL_sampling_theorem}). Therefore, by the spherical quadrature rule of order $B$ 
in Theorem \ref{thm:S2_sampling_theorem}, the inner sum in \eqref{eq:SGL-Sampling-Theorem_rearranged} equals the 
spherical Fourier coefficient $\langle f(r_{i},\cdot,\cdot), Y_{lm} \rangle_{\mathbb{S}^{2}}$. The computation 
of these inner sums thus amounts to the computation of all spherical Fourier coefficients of $f$ restricted to 
the sphere of radius $r_{i}$ for each $i$. 

The fast spherical Fourier transforms
described by \cite{sfft2} are a suitable means to solve this task. At the same time, the corresponding
fast inverse transforms allow the function values $f(r_{i},\vartheta_{j},\varphi_{k})$ to be reconstructed
from the spherical Fourier coefficients $\langle f(r_{i},\cdot,\cdot), Y_{lm} \rangle_{\mathbb{S}^{2}}$
for each sampling radius $r_{i}$. This constitutes the spherical part of our FSGLFTs and iFSGLFTs. We include a brief 
discussion on fast spherical Fourier transforms based on the spherical quadrature rule of Theorem 
\ref{thm:S2_sampling_theorem} in the upcoming Section \ref{subsubsec:SFFT}.

In order to compute the SGL Fourier coefficients $\hat{f}_{nlm}$ from the precomputed spherical Fourier 
coefficients $\langle f(r_{i},\cdot,\cdot), Y_{lm} \rangle_{\mathbb{S}^{2}}$, that is, to evaluate the 
outer sum in \eqref{eq:SGL-Sampling-Theorem_rearranged}, we use the above-mentioned discrete 
$R$ transform, running through all pairs of $m$ and $l$ with $|m| \leq l < B$ (cf.\ \eqref{eq:DRT} and 
\eqref{eq:SGL-Sampling-Theorem_rearranged}). This new transform is presented in the upcoming Section 
\ref{subsubsec:DRT}. The inverse discrete $R$ transform, also presented in Section \ref{subsubsec:DRT}, 
allows the spherical Fourier coefficients $\langle f(r_{i},\cdot,\cdot), Y_{lm} \rangle_{\mathbb{S}^{2}}$ 
to be reconstructed from the SGL Fourier coefficients $\hat{f}_{nlm}$ with the same asymptotic complexity
as the forward transform. The discrete $R$ transform and its inverse thus make up the radial part of our 
FSGLFTs and iFSGLFTs.

\subsubsection{Fast equiangular spherical Fourier transforms}\label{subsubsec:SFFT}

Let $g$ be a polynomial of degree $L\!-\!1$ on $\mathbb{S}^{2\!}$, i.e., $g \in \textnormal{span}\lbrace Y_{lm\!} : 
|m| \leq l < L \rbrace$, $L \in \mathbb{N}$. \emph{Fast equiangular spherical Fourier transforms} 
based on Theorem \ref{thm:S2_sampling_theorem} allow the spherical Fourier coefficients $\langle g, Y_{lm} 
\rangle_{\mathbb{S}^{2}}$ of $g$ to be computed with an asymptotic complexity of less than $\mathcal{O}(L^{5})$, 
which is associated with the naive approach (cf.\ \eqref{eq:S2_sampling_theorem}). A large class of such fast transforms 
was derived and thoroughly tested by \cite{sfft2}. In that work, the authors also presented corresponding 
fast inverse transforms with the same asymptotic complexity. This is a major advantage of their approach as 
compared with the preceding work by \cite{sfft}. 

The fast spherical Fourier transforms of \citeauthor{sfft2}\ were developed in several steps, which 
has led to different variants of the basic algorithm with different asymptotic complexities, ranging from 
$\mathcal{O}(L^{4})$, when using a separation of variables only, to $\mathcal{O}(L^{2}\log^{2\!} L)$, when using 
all techniques presented. We include the derivation of one particular variant, the \emph{seminaive} 
algorithm and its inverse, here. These seminaive algorithms are later used in the 
numerical experiment of Section \ref{sec:experiments}.

By \eqref{eq:spherical_harmonics}, a rearrangement of the right-hand side of \eqref{eq:S2_sampling_theorem} yields
\begin{equation}\label{eq:SFFT_rearranged}
\langle g, Y_{lm} \rangle_{\mathbb{S}^{2}} \hspace*{2pt}=\hspace*{2pt} M_{lm\!} \sum_{j=0}^{2L - 1} \!b_{\!j} \hspace*{1pt} P_{lm}(\cos\vartheta_{j})\! \sum_{k=0}^{2L - 1}\! g(\vartheta_{j}, \varphi_{k}) \hspace*{1pt} \mathrm{e}^{-\mathrm{i} m \varphi_{k}\!}, \:~~~~~ |m| \leq l < L,
\end{equation}
denoting by $M_{lm}$ the normalization constant of the spherical harmonic $Y_{lm}$. 
This separation of variables reduces the asymptotic complexity of the naive spherical Fourier 
transform from $\mathcal{O}(L^{5})$ to $\mathcal{O}(L^{4})$, as indicated above.

Precomputation of the inner sum in \eqref{eq:SFFT_rearranged} for all $-L < m < L$ can be done in 
$\mathcal{O}(L \log L)$ steps for each $j$ by using a standard Cooley-Tukey FFT (see 
\cite[Sect.\ 30.2]{algorithms}, for example). This results in an asymptotic complexity of $\mathcal{O}(L^{2} \log L)$ 
for this first step, while the total asymptotic complexity of $\mathcal{O}(L^{4})$ remains the same.

The central tool in the fast transforms of \citeauthor{sfft2}\ is a fast \emph{discrete Legendre transform} (DLT), 
i.e., a tool to compute the collection of sums
\begin{equation}\label{eq:DLT}
M_{lm\!} \sum_{j=0}^{2L - 1}\! b_{\!j} \hspace*{1pt} P_{lm}(\cos\vartheta_{j}) \hspace*{1pt} t_{j}, ~~~~~~ l = |m|, \dots, L - 1,
\end{equation}
for a fixed $- L < m < L$, $[t_{j}]_{j = 0,\dots,2L - 1}$ being arbitrary 
input data. Such fast DLT can be used to evaluate the outer sum in \eqref{eq:SFFT_rearranged}, running 
through all $-L < m < L$.

In the seminaive algorithm, the asymptotic complexity of the naive DLT is reduced by a fast 
\emph{discrete cosine transform} (DCT). The general DCT is defined as follows (cf.\ \cite[Sect.\ 5.6]{algorithms2}): Let 
$\vec{u} \coloneqq [u_{j}]_{j=0,\dots,N-1\!}$ be some data of length $N \!\in \mathbb{N}$ and set, in this subsection 
only, $\vartheta_{j} \coloneqq (2j + 1) \pi / 2N$. Any method for computation of the matrix-vector product
\begin{equation}\label{eq:DCT}
\underbrace{
\begin{bmatrix}
\sqrt{1/N}\!\! &                    &                &                \\
               & \!\!\sqrt{2/N}\!\! &                &                \\
               &                    & \!\!\ddots\!\! &                \\
               &                    &                & \!\!\sqrt{2/N} 
\end{bmatrix}
\cdot
\begin{bmatrix}
1                            & \cdots & 1                              \\
\cos \vartheta_{0}           & \cdots & \cos \vartheta_{N-1}           \\
\vdots                       &        & \vdots                         \\
\cos((N\!-\!1)\vartheta_{0}) & \cdots & \cos((N\!-\!1)\vartheta_{N-1})
\end{bmatrix}
}_{\hspace*{12pt}\eqqcolon\,C_{N}}
\cdot\hspace*{3.5pt}
\vec{u}
\end{equation}
is referred to as a DCT. Such computation is apparently associated with an asymptotic complexity of
$\mathcal{O}(N^{2})$ if no fast algorithm is used. By a factorization of the DCT matrix $C_{N}$, 
the asymptotic complexity of the naive DCT can be reduced to $\mathcal{O}(N \log N)$ (see 
\citep{dct} or \citep[Sect.\ 5.6]{algorithms2}, for example).

Two properties of such DCT are particularly important in our context: Firstly, the DCT matrix $C_{N}$ 
is \emph{orthogonal}. If $\vec{v}$ and $\vec{w}$ are two vectors of length 
$N\!$, and $\langle\cdot,\cdot\rangle_{\mathbb{C}^{N\!}}$ denotes the standard Euclidean inner product, this 
means that $\langle \vec{v}, \vec{w} \rangle_{\mathbb{C}^{N}} = \langle C_{N} 
\vec{v}, C_{N} \vec{w} \rangle_{\mathbb{C}^{N\!}}$. Secondly, if \hspace*{1pt}$\vec{p} \coloneqq 
[p(\vartheta_{j})]_{j=0,\dots,N-1}$, $p$ being an arbitrary trigonometric polynomial of degree at most 
$N\!$, then the elements $[C_{N} \vec{p}\hspace*{0.5pt}]_{j\!}$ vanish for $j > \textnormal{deg}(p)$.

We now consider the case $m = 0$ in \eqref{eq:DLT}; all other cases can be treated 
similarly. Choose $N \!= 2L$ above, and set 
\begin{equation*}
\vec{t} \coloneqq [b_{\!j} \hspace*{1pt} t_{j}]_{j=0,\dots,2L-1} ~~~~ \textnormal{and} ~~~~
\vec{P}_{\!l} \coloneqq M_{l,0} \cdot [P_{l,0}(\cos\vartheta_{j})]_{j=0,\dots,2L-1}, ~~~~ l < L.
\end{equation*}
Computation of the collection of sums \eqref{eq:DLT} then amounts to the computation of the inner product 
$\langle \vec{t}, \vec{P}_{\!l} \rangle_{\mathbb{C}^{2L\!}}$ for each $l$.
Since $P_{l,0}(\cos\vartheta)$ is a trigonometric polynomial of degree $l$, we have that 
\begin{equation}\label{eq:dct_trick}
\langle \vec{t}, \vec{P}_{\!l} \rangle_{\mathbb{C}^{2L\!}} 
\hspace*{1pt}=\hspace*{1pt} \langle C_{2L} \vec{t}, C_{2L} \vec{P}_{\!l} \rangle_{\mathbb{C}^{2L\!}}
\hspace*{1pt}=\hspace*{0pt} \!\sum_{j=0}^{2L - 1} [C_{2L} \vec{t}]_{j} \hspace*{2pt} [C_{2L}  \vec{P}_{\!l}]_{j}
\hspace*{1pt}=\hspace*{1pt} \!\sum_{j=0}^{l}\hspace*{2.5pt} [C_{2L} \vec{t}]_{j} \hspace*{2pt} [C_{2L} \vec{P}_{\!l}]_{j}.
\end{equation}

Equation \eqref{eq:dct_trick} shows that the inner product $\langle \vec{t}, 
\vec{P}_{\!l} \rangle_{\mathbb{C}^{2L}}$ can be computed in $l$ steps instead of $2L - 1$, if the vectors 
$C_{2L} \vec{t}$ and $C_{2L} \vec{P}_{\!l}$ are readily available. When this approach is used 
for all $m$, this does not yet change the asymptotic complexity, but reduces the total required computation 
work significantly for sufficiently large $L$. A truly \emph{fast} DLT can now be obtained in the following way: Let 
$\vec{P}_{\!lm} \coloneqq M_{lm\!} \cdot [P_{lm}(\cos\vartheta_{j})]_{j=0,\dots,2L-1}$, 
$|m| \leq l < L$. Since the vectors $C_{2L} \vec{P}_{\!lm\!}$ do not depend on the input data, and a 
significantly large part of their elements are zero, we may assume them to be stored and readily available during 
runtime. This results in a fast DLT with an asymptotic complexity of $\mathcal{O}(L^{2})$ instead of 
$\mathcal{O}(L^{3})$, and, hence, in a fast spherical Fourier transform with an asymptotic
complexity of $\mathcal{O}(L^{3})$ instead of $\mathcal{O}(L^{4})$.

To close this subsection, we revise the derivation of the inverse seminaive spherical Fourier 
transform. The task is to reconstruct the function values $g(\vartheta_{j},\varphi_{k})$ from the spherical
Fourier coefficients $\langle g, Y_{lm} \rangle_{\mathbb{S}^{2}}$. For this purpose, 
\citeauthor{sfft2}\ again use a separation of variables to get
\begin{equation}\label{eq:iSFFT_rearranged}
g(\vartheta_{j},\varphi_{k}) \hspace*{2pt}= \hspace*{-7pt}\sum_{m=-L+1}^{L-1}\hspace*{-9pt} \mathrm{e}^{\mathrm{i} m \varphi_{k}}\! \sum_{l= |m|}^{L-1} \!M_{lm} \hspace*{1pt} P_{lm}(\cos\vartheta_{j}) \hspace*{1pt} \langle g, Y_{lm} \rangle_{\mathbb{S}^{2}}, \:~~~ j,k = 0,\dots,2L-1.
\end{equation}

\begin{sloppypar}
It is clear that the function values $g(\vartheta_{j},\varphi_{k})$ can be reconstructed from
the collection of inner sums in \eqref{eq:iSFFT_rearranged} by means of a standard iFFT, 
which has the same asymptotic complexity $\mathcal{O}(L \log L)$ as the forward transform (again, 
see \cite[Sect.\ 30.2]{algorithms}).
\end{sloppypar}

We again consider the case $m = 0$ only. The collection of inner sums in \eqref{eq:iSFFT_rearranged}
can then be written as the matrix-vector product
\begin{equation*}
\underbrace{
\begin{bmatrix}
P_{0,0}(\cos\vartheta_{0})    & \cdots & P_{L-1,0}(\cos\vartheta_{0})    \\
\vdots                        &        & \vdots                          \\
P_{0,0}(\cos\vartheta_{2L-1}) & \cdots & P_{L-1,0}(\cos\vartheta_{2L-1})
\end{bmatrix}
\cdot
\begin{bmatrix}
M_{0,0}\!\! &                &               \\
            & \!\!\ddots\!\! &               \\
            &                & \!\!M_{L-1,0}
\end{bmatrix}
%
}_{\hspace*{11pt}\eqqcolon\,P_{0}^{\mathrm{T}}}
\cdot\hspace*{3.5pt}
[\langle g, Y_{l,0} \rangle_{\mathbb{S}^{2}}]_{l=0,\dots,L-1}.
\end{equation*}
Due to Theorem \ref{thm:S2_sampling_theorem} and the orthonormality of the spherical harmonics,
the non-transposed matrix $P_{0}$ is associated with the forward transform (we encounter the same 
phenomenon in the upcoming Sections \ref{subsubsec:DRT} and \ref{subsec:algebra}). In particular,
$P_{0}$ represents the forward DLT when dropping the weights $b_{j\!}$ (cf.\ \eqref{eq:DLT}). 
The above discussion shows that $P_{0}$ possesses the factorization
\begin{equation*}
P_{0} \hspace*{2pt}=\hspace*{2pt} [C_{2L}\vec{P}_{0,0}\hspace*{1pt},\dots,C_{2L}\vec{P}_{\!L-1,0}]^{\mathrm{T}\!} \hspace*{0.5pt}\cdot\hspace*{1pt} C_{2L}. 
\end{equation*}
This immediately reveals
\begin{equation*}
P_{0}^{\mathrm{T}\!} \hspace*{2pt}=\hspace*{2pt} C_{2L\!}^{\mathrm{T}} \hspace*{1pt}\cdot\hspace*{1pt} [C_{2L}\vec{P}_{0,0}\hspace*{1pt},\dots,C_{2L}\vec{P}_{\!L-1,0}]
\end{equation*}
for the inverse transform.

At this point, we recall the identity $C_{2L\!}^{\mathrm{T}\!} = C_{2L}^{-1}$, i.e., the 
orthogonality of the DCT matrix $C_{2L}$. By the use of a fast inverse DCT (iDCT) with an asymptotic 
complexity of $\mathcal{O}(L \log L)$ (again, see \citep{dct} or \cite[Sect.\ 5.6]{algorithms2}), the 
same ideas as above now easily yield the inverse seminaive spherical Fourier transform of \citeauthor{sfft2}, 
which has the same asymptotic complexity $\mathcal{O}(L^{3})$ as the forward transform.

\subsubsection{Completion: discrete $\vec{R}$ transforms}\label{subsubsec:DRT}

We now discuss our discrete $R$ transform (DRT) and its inverse (iDRT) to finalize the above-described 
class of fast SGL Fourier transforms. To this end, let $\vec{s} \coloneqq [s_{i}]_{i=0,\dots,2B-1}$
be some input data. For a fixed $l$, we bring the right-hand side of \eqref{eq:DRT} into matrix-vector 
notation:\vspace*{-4pt}
\begin{equation}\label{eq:DRT_matrix}
\left\lbrace\! 
\vphantom{
\begin{bmatrix}
N_{l+1,l}\hspace*{-6pt} &                                    &                       \\
                        & \hspace*{-6pt}\ddots\hspace*{-2pt} &                       \\
                        &                                    & \hspace*{-2pt}N_{B,l} 
\end{bmatrix}
}
\right.\!\!\underbrace{\!
\begin{bmatrix}
N_{l+1,l}\hspace*{-6pt} &                                    &                       \\
                        & \hspace*{-6pt}\ddots\hspace*{-2pt} &                       \\
                        &                                    & \hspace*{-2pt}N_{B,l} 
\end{bmatrix}
\!\!\!\!\hspace*{1pt}
\begin{bmatrix}
R_{l+1,l}(r_{0})\! & \cdots & \!R_{l+1,l}(r_{2B-1}) \\
\vdots             &        & \vdots                \\
R_{B,l}(r_{0})     & \cdots & R_{B,l}(r_{2B-1}) 
\end{bmatrix}\!
}_{\hspace*{10pt}\eqqcolon\,R_{l}}
\!\!\hspace*{2pt}
\overbrace{\!
\begin{bmatrix}
\mathrm{e}^{-r_{0}^{2}}\hspace*{-4pt} &                                    &                                            \\
                                      & \hspace*{-4pt}\ddots\hspace*{-3pt} &                                            \\
                                      &                                    & \hspace*{-3pt}\mathrm{e}^{-r_{2B-1\!}^{2}} 
\end{bmatrix}\!}^{\hspace*{8pt}\eqqcolon\,E}
\!\! \left.
\vphantom{
\begin{bmatrix}
N_{l+1,l}\!\!\! &                    &               \\
                & \!\!\!\ddots\!\!\! &               \\
                &                    & \!\!\!N_{B,l} 
\end{bmatrix}
}
\!\right\rbrace 
\!\!\big\lbrace \textnormal{diag}[\tilde{a}_{i}]_{i=0,\dots,2B - 1\!}
\cdot
\vec{s} \big\rbrace\!.
\end{equation}
Running the forward DRT of order $l$, which we now rigorously define as the evaluation of \eqref{eq:DRT_matrix},
can be done in $\mathcal{O}(B^{2})$ steps by using the \emph{Clenshaw algorithm} \citep{clenshaw}.
This is possible, since the Laguerre polynomial $L_{n-l-1}^{(l+1/2)\!}$ is included in the radial part $R_{nl}$ 
of the SGL basis functions as a factor, and the radial functions $R_{nl}$ thus satisfy the following three-term 
recurrence relation:
\begin{lemma}
Let \hspace*{1pt}$0 \leq l < B$ and $r \in [0,\infty)$ be given. Then 
\begin{equation*}
R_{n+1,l}(r) \hspace*{2pt}=\hspace*{2pt} \frac{2n - l - 1/2 - r^{2}}{\sqrt{(n + 1/2)(n-l)}} R_{nl}(r) - \sqrt{\frac{(n - 1/2)}{(n + 1/2)}\frac{(n-l-1)}{(n-l)}} R_{n-1,l}(r)
\end{equation*}
for $n > l$, and 
\begin{equation*}
R_{l+1,l}(r) \hspace*{2pt}=\hspace*{2pt} \sqrt{\frac{2}{\Gamma(l + 3/2)}} \hspace*{1pt} r^{l\!}, ~~~~~~ R_{ll} \hspace*{2pt}\equiv\hspace*{2pt} 0. 
\end{equation*} 
\end{lemma}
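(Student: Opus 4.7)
The plan is to derive the recurrence directly from the classical three-term recurrence for generalized Laguerre polynomials (equation \eqref{eq:laguerre_recursion}) and track how the normalization $N_{nl}$ transforms it. Note first that consistency with the initial value $R_{l+1,l}(r) = \sqrt{2/\Gamma(l+3/2)}\,r^l$ and with the matrix $R_l$ in \eqref{eq:DRT_matrix} indicates that the lemma is stated for the \emph{normalized} radial parts $N_{nl}\,L_{n-l-1}^{(l+1/2)}(r^2)\,r^l$, and the proof should be organized accordingly.

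First I would set $\alpha \coloneqq l + 1/2$ and $k \coloneqq n - l - 1$ in \eqref{eq:laguerre_recursion}, substitute $t \coloneqq r^2$, and multiply both sides by $r^l$. This yields an un-normalized recurrence purely in terms of the products $L_{n-l-1}^{(l+1/2)}(r^2)\,r^l$, namely
\begin{equation*}
(n-l)\,L_{n-l}^{(l+1/2)}(r^2)\,r^l \hspace*{2pt}=\hspace*{2pt} (2n - l - 1/2 - r^2)\,L_{n-l-1}^{(l+1/2)}(r^2)\,r^l \hspace*{2pt}-\hspace*{2pt} (n - 1/2)\,L_{n-l-2}^{(l+1/2)}(r^2)\,r^l,
\end{equation*}
after simplifying the coefficients using $2k + \alpha + 1 = 2n - l - 1/2$ and $k + \alpha = n - 1/2$.

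Next I would insert the normalization factors. Using $\Gamma(n+1/2) = (n-1/2)\,\Gamma(n-1/2)$, one computes the ratios
\begin{equation*}
\frac{N_{n+1,l}}{N_{nl}} \hspace*{2pt}=\hspace*{2pt} \sqrt{\frac{n-l}{n+1/2}}, ~~~~~~ \frac{N_{n+1,l}}{N_{n-1,l}} \hspace*{2pt}=\hspace*{2pt} \sqrt{\frac{(n-l)(n-l-1)}{(n+1/2)(n-1/2)}}.
\end{equation*}
Dividing the previous display by $(n-l)$ and multiplying through by $N_{n+1,l}$ then rewrites every term as a normalized $N_{\cdot,l} R_{\cdot,l}$, producing exactly the coefficients $(2n - l - 1/2 - r^2)/\sqrt{(n+1/2)(n-l)}$ and $\sqrt{(n-1/2)(n-l-1)/((n+1/2)(n-l))}$ asserted in the lemma.

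Finally, I would verify the initial conditions: for $n = l + 1$ we have $k = 0$, so $L_0^{(l+1/2)} \equiv 1$ and $N_{l+1,l} = \sqrt{2/\Gamma(l+3/2)}$, giving the stated $R_{l+1,l}(r)$; the convention $R_{ll} \equiv 0$ makes the recurrence launch correctly from $n = l+1$, mirroring the standard treatment of the lower boundary in Clenshaw-type schemes. The only nontrivial step is the algebraic bookkeeping in the second paragraph, which is routine but must be executed carefully because the gamma ratios and factorial ratios only collapse cleanly after using the duplication $\Gamma(n+1/2) = (n-1/2)\Gamma(n-1/2)$; everything else is a direct substitution.
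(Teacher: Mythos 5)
Your proof is correct and is exactly the derivation the paper implies but does not write out: the lemma is stated without proof, the surrounding text merely noting that the recurrence follows because $L_{n-l-1}^{(l+1/2)}$ enters $R_{nl}$ as a factor, i.e.\ by substituting $\alpha = l+1/2$, $k = n-l-1$, $t = r^2$ into \eqref{eq:laguerre_recursion} and carrying the normalization constants through, which is what you do. Your observation that the recurrence as displayed only holds for the \emph{normalized} radial parts $N_{nl}R_{nl}$ (consistent with the initial value $\sqrt{2/\Gamma(l+3/2)}\,r^l$ and with the matrix $R_l$ of \eqref{eq:DRT_matrix}) is a correct and worthwhile clarification of the paper's notation; the only quibble is that $\Gamma(n+1/2) = (n-1/2)\,\Gamma(n-1/2)$ is the functional equation of the gamma function, not the duplication formula.
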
\vspace*{4pt}

In view of the iDRT, we observe that for all $0 \leq m,n \leq B-l$,
\begin{align*}
\left[R_{l} \cdot \textnormal{diag}[a_{i} \hspace*{1pt} r_{i}^{2}]_{i=0,\dots,2B - 1\!} \cdot R_{l}^{\mathrm{T}}\hspace*{1pt} \right]_{mn\!}
\hspace*{2pt}&=\hspace*{2pt} \!\sum_{k = 0}^{2B - 1}\! a_{k} \hspace*{1pt} r_{k}^{2} \hspace*{1.5pt} N_{l+m,l} \hspace*{0pt} \overbrace{R_{l+m,l}}^{\hspace*{-20pt}\hspace*{7pt}\textnormal{deg}\:\leq\:2B-2\hspace*{-20pt}}\hspace*{0.5pt}(r_{k}) \hspace*{1pt} N_{l+n,l} \hspace*{0pt} \overbrace{R_{l+n,l}}^{\hspace*{-20pt}\hspace*{7pt}\textnormal{deg}\:\leq\:2B-2\hspace*{-20pt}}\hspace*{0.5pt}(r_{k})\\
\hspace*{2pt}&=\hspace*{2pt} \!\int_{0}^{\infty}\!\! N_{l+m,l} \hspace*{1pt} R_{l+m,l}\hspace*{0.5pt}(r) \hspace*{1pt} N_{l+n,l} \hspace*{1pt} R_{l+n,l}\hspace*{0.5pt}(r) \hspace*{1pt} r^{2} \hspace*{1pt} \mathrm{e}^{-r^{2\!}} \hspace*{1pt} \mathrm{d}r\\[8pt]
\hspace*{2pt}&=\hspace*{2pt} \delta_{mn},
\end{align*}
by Theorem \ref{thm:half-range_Hermite} and the orthogonality of the polynomials $R_{l+m,l}$ and $R_{l+n,l}$ 
(cf.\ Section \ref{sec:sgl}). Note that we dropped the factors $\exp(r_{i}^{2})$ and $\exp(-r_{i}^{2})$ here 
(again, see Section \ref{sec:discussion}). Hence,
\begin{equation*}
R_{l}^{\mathrm{T}\!} \hspace*{2pt}=\hspace*{2pt} \left\lbrace R_{l} \cdot \textnormal{diag}[a_{i} \hspace*{1pt} r_{i}^{2}]_{i=0,\dots,2B - 1} \right\rbrace^{\!-1} \vspace*{4pt} \hspace*{-11pt}.
\end{equation*}
For a given data vector $\vec{t} \coloneqq [t_{k}]_{k=0,...,B-l}$, we thus define the iDRT of order $l$ as the 
evaluation of the product $R_{l}^{\mathrm{T}\!} \cdot \vec{t}$. This can be done in $\mathcal{O}(B^{2})$ steps, 
just as one run of the forward transform (cf.\ Section \ref{subsec:algebra}).

Summarizing all of the above, we state the layout of our FSGLFTs and iFSGLFTs as Algorithm \ref{alg:FSGLFT} 
and \ref{alg:iFSGLFT}, respectively. Note that all of these transforms have an asymptotic complexity of 
$\mathcal{O}(B^{4})$ instead of the naive $\mathcal{O}(B^{7})$.

\begin{samepage}
\begin{algorithm}[t]\vspace*{4pt}
 \KwData{Sample values $f(r_{i}, \vartheta_{j}, \varphi_{k})$; $i,j,k = 0,\dots,2B\!-\!1$, of a function $f$ with bandlimit $B\!\hspace*{1pt} \in \mathbb{N}$}\vspace*{3pt}
 \KwResult{SGL Fourier coefficients $\hat{f}_{nlm}$, $|m| \leq l < n \leq B$}\vspace*{3pt}
 \For{$i = 0$ \KwTo $2B - 1$}
 {
    \vspace*{3pt}
    Compute Fourier coefficients $\langle f(r_{i},\cdot,\cdot), Y_{lm} \rangle_{\mathbb{S}^{2}}$, $|m| \leq l < B$, from function samples $f(r_{i},\vartheta_{j},\varphi_{k})$; $j,k=0,\dots,2B-1$, by using a fast spherical Fourier transform;\vspace*{3pt}
 }\vspace*{5pt}
 \For{$m = 1 - B$ \KwTo $B - 1$}
 {
    \For{$l = |m|$ \KwTo $B - 1$}
    {
       \vspace*{3pt}
       Compute SGL Fourier coefficients $[\hat{f}_{nlm}]_{n=l+1,\dots,B}$ by using the DRT in\vspace*{-4pt}
       \begin{equation*}
          [\hat{f}_{nlm}]_{n=l+1,\dots,B} = \lbrace \!\hspace*{1pt} R_{l} \!\hspace*{1.5pt}\cdot\!\hspace*{1.5pt} E \rbrace \!\hspace*{1pt}\cdot\!\hspace*{1pt} \big\lbrace \textnormal{diag}[\tilde{a}_{i}]_{i=0,\dots,2B - 1\!}\hspace*{1pt} \cdot [\langle f(r_{i},\cdot,\cdot), Y_{lm} \rangle_{\mathbb{S}^{2}}]_{i=0,\dots,2B-1}\big\rbrace;\vspace*{-6pt}
       \end{equation*}
    }
 }
 \caption{Prototypical FSGLFT}\label{alg:FSGLFT}
\end{algorithm}
\begin{algorithm}[t]\vspace*{4pt}
 \KwData{SGL Fourier coefficients $\hat{f}_{nlm}$, $|m| \leq l < n \leq B$, of a function $f$ with bandlimit $B\!\hspace*{1pt} \in \mathbb{N}$}\vspace*{3pt}
 \KwResult{Function values $f(r_{i}, \vartheta_{j}, \varphi_{k})$; $i,j,k = 0,\dots,2B-1$}\vspace*{3pt}
 \For{$m = 1 - B$ \KwTo $B - 1$}
 {
    \For{$l = |m|$ \KwTo $B - 1$}
    {
       \vspace*{3pt}
       Reconstruct spherical Fourier coefficients $\langle f(r_{i},\cdot,\cdot), Y_{lm} \rangle_{\mathbb{S}^{2}}$, $i = 0, \dots, 2B - 1$, by using the iDRT in\vspace*{-4pt}
       \begin{equation*}
          [\langle f(r_{i},\cdot,\cdot), Y_{lm} \rangle_{\mathbb{S}^{2}}]_{i=0,\dots,2B-1} = R_{l}^{\mathrm{T}} \cdot [\hat{f}_{nlm}]_{n=l+1,\dots,B};\vspace*{-6pt}
       \end{equation*}
    }
 }\vspace*{5pt}
 \For{$i = 0$ \KwTo $2B - 1$}
 {
    \vspace*{3pt}
    Reconstruct function values $f(r_{i},\vartheta_{j},\varphi_{k})$; $j,k=0,\dots,2B-1$, from Fourier coefficients $\langle f(r_{i},\cdot,\cdot), Y_{lm} \rangle_{\mathbb{S}^{2}}$, $|m| \leq l < B$, by using a fast inverse spherical Fourier transform;\vspace*{3pt}
 }
 \caption{Prototypical iFSGLFT} \label{alg:iFSGLFT}
\end{algorithm}
\end{samepage}

\subsection{Matrix-vector notation of the transforms}\label{subsec:algebra}

In this section, we give a description of the transforms presented in Sections \ref{subsec:DSGLFT} and \ref{subsec:FSGLFT} 
in terms of matrix-vector products. We shall use the standard \emph{Kronecker product}, denoted by $\otimes$. 
To simplify the notation further, for a fixed bandlimit $B \in \mathbb{N}$, we introduce the linear indices
%
\begin{align*}
\mu \hspace*{2pt}&=\hspace*{2pt} \mu(j,k) \hspace*{2pt}\coloneqq\hspace*{2pt} 2Bj + k,\\
\psi \hspace*{2pt}&=\hspace*{2pt} \psi(i,j,k) \hspace*{2pt}\coloneqq\hspace*{2pt} 4B^{2}i + 2Bj + k,\\
\nu \hspace*{2pt}&=\hspace*{2pt} \nu(l,m) \hspace*{2pt}\coloneqq\hspace*{2pt} l(l+1) + m,\\
\omega \hspace*{2pt}&=\hspace*{2pt} \omega(n,l,m) \hspace*{2pt}\coloneqq\hspace*{2pt} n (n-1) (2n - 1) / 6 + l(l+1) + m,
\end{align*}
%
so that $\mu$ enumerates the sampling angles $\vec{a}_{\mu} \coloneqq (\vartheta_{j},\varphi_{k})$
and corresponding weights $c_{\mu\!} \coloneqq b_{\!j}$; $j,k = 0,\dots,2B-1$, $\psi$ enumerates the sampling points 
$\vec{x}_{\psi} \coloneqq (r_{i},\vartheta_{j},\varphi_{k})$ and corresponding weights $w_{\psi} \coloneqq 
\tilde{a}_{i}\hspace*{1pt}b_{\!j}$; $i,j,k = 0,\dots,2B-1$, $\nu$ enumerates the spherical harmonics $Y_{\nu} \coloneqq 
Y_{lm}$, $|m| \leq l < B$, while $\omega$ enumerates the SGL basis functions $H_{\omega} \coloneqq H_{nlm}$, 
$|m| \leq l < n \leq B$. The indices of the rows and columns of a matrix shall be separated by a semicolon.
Because the following considerations are mainly theoretical, we omit all brackets solely relevant in practice, 
i.e., those specifying the order of operations.

Let a function $f$ with bandlimit $B \in \mathbb{N}$ and SGL Fourier coefficients $\hat{f}_{\omega}
\coloneqq \hat{f}_{nlm}$ be given. Furthermore, let $\Psi \coloneqq 8B^{3\!}$ denote the total
number of sample points $\vec{x}_{\psi}$, and let $\Omega \coloneqq B (B+1) (2B+1)/6$ denote the total number of SGL 
Fourier coefficients $\hat{f}_{\omega}$ to be computed. We define the sample vector $\vec{f} \coloneqq [f(\vec{x}_{\psi})
]_{\psi = 0,\dots,\Psi - 1}$ and the vector $\hat{\vec{f}\hspace*{2pt}}\hspace*{-2pt} \coloneqq [\hat{f}_{\omega}]_{\omega = 0,\dots,\Omega-1}$ 
containing the SGL Fourier coefficients of $f$. The naive DSGLFT/iDSGLFT of Section \ref{subsec:DSGLFT} can thus be restated
as
\begin{align*}
\hat{\vec{f}\hspace*{2pt}}\hspace*{-2pt} \hspace*{2pt}&=\hspace*{2pt} [\overline{H_{\omega}(\vec{x}_{\psi})} \exp(-|\vec{x}_{\psi}|^{2})]_{\omega=0,\dots,\Omega-1;\hspace*{1pt}\psi=0,\dots,\Psi-1} \cdot \textnormal{diag}[w_{\psi}]_{\psi=0,\dots,\Psi-1} \cdot \hspace*{1pt}\vec{f},\\[6pt]
\vec{f} \hspace*{2pt}&=\hspace*{2pt} [\overline{H_{\omega}(\vec{x}_{\psi})}]_{\omega=0,\dots,\Omega-1;\hspace*{1pt}\psi=0,\dots,\Psi-1}^{\mathrm{H}} \cdot\hspace*{0pt} \hspace*{1pt}\hat{\vec{f}\hspace*{2pt}}\hspace*{-2pt}\!\hspace*{0.5pt},
\end{align*}
respectively. Note that the quadrature weights $w_{\psi}$ appear only in the forward transform.
Furthermore, it becomes apparent that the inverse transform is represented by precisely the Hermitean 
transpose of the matrix associated with the forward transform, when the weighting of the SGL basis 
function samples $H_{\omega}(\vec{x}_{\psi})$ and of the function samples $\vec{f}$ is dropped 
(compare also Sections \ref{subsubsec:SFFT} and \ref{subsubsec:DRT}). This is a consequence of the 
orthonormality of the SGL basis functions and our SGL sampling theorem. Recall that the computation 
of each element of the transformation matrices requires $\mathcal{O}(B)$ steps. Because the size of 
these matrices is both $B(B+1)(2B+1)/6 \times 8B^{3\!}$, this gives the total asymptotic complexity 
of $\mathcal{O}(B^{7})$ of both forward and inverse transform.

Let the matrices $R_{l}$, $0 \leq l < B$, and $E$ be defined as in \eqref{eq:DRT_matrix}. Set further 
\begin{alignat*}{2}
&\hspace*{1pt}Y &&\coloneqq\hspace*{2pt} [\overline{Y_{\nu}(\vec{a}_{\mu})}]_{\nu=0,\dots,B^{2}-1;\hspace*{1pt}\mu=0,\dots,4B^{2}-1},\\
&A \hspace*{2pt}&&\coloneqq\hspace*{2pt} \textnormal{diag}[\tilde{a}_{i}]_{i=0,\dots,2B-1}, \vphantom{[\overline{Y_{\nu}(\vec{a}_{\mu})}]_{\nu=0,\dots,B^{2}-1;\hspace*{1pt}\mu=0,\dots,4B^{2}-1}}\\
&C \hspace*{2pt}&&\coloneqq\hspace*{2pt} \textnormal{diag}[c_{\mu}]_{\mu=0,\dots,4B^{2}-1}\vphantom{[\overline{Y_{\nu}(\vec{a}_{\mu})}]_{\nu=0,\dots,B^{2}-1;\hspace*{1pt}\mu=0,\dots,4B^{2}-1} \mathrm{e}^{r_{i}^{2}}]_{i=0,\dots,2B-1}},
\end{alignat*}
and let $I_{N}$ denote the $N \times N$ identity matrix ($N\! \in \mathbb{N}$). Note that $B^{2}$ is the total number of spherical harmonics
$Y_{lm}$, $|m| \leq l < B$, while $4B^{2}$ is the total number of sampling angles $(\vartheta_{j},\varphi_{k})$; 
$j,k = 0,\dots,2B-1$. After the separation of variables described in Section \ref{subsec:FSGLFT}, we find that
\begin{alignat*}{2}
&\hat{\vec{f}\hspace*{2pt}}\hspace*{-2pt} \hspace*{2pt}=\hspace*{2pt} P \hspace*{2pt}\cdot 
&&\overbrace{\!\begin{bmatrix}
                            &                &                             \\[-10pt]
\boxed{\tilde{R}_{1-B}}\!\! &                &                             \\[-4pt]
                            & \!\!\ddots\!\! &                             \\
                            &                & \!\!\boxed{\tilde{R}_{B-1}} \\[5.5pt]
\end{bmatrix}\!}^{\hspace*{-20pt}2B - 1 \textnormal{ blocks (see below)}\hspace*{-20pt}}
\hspace*{1pt}\cdot\hspace*{1pt} \overbrace{\lbrace I_{B^{2\!}} \otimes E \rbrace}^{\textnormal{diagonal}} 
\hspace*{1pt}\cdot\hspace*{1pt}  \overbrace{\lbrace I_{B^{2\!}} \otimes A \rbrace}^{\textnormal{diagonal}}
\hspace*{1pt}\cdot \hspace*{3pt} Q \cdot
\overbrace{\!\begin{bmatrix}
              &                &               \\[-10.5pt]
\boxed{Y}\!\! &                &               \\[-4pt]
              & \!\!\ddots\!\! &               \\
              &                & \!\!\boxed{Y} \\[3.5pt]
\end{bmatrix}\!}^{\hspace*{-30pt}2B \textnormal{ blocks of size } B^{2} \times \hspace*{1pt} 4B^{2}\hspace*{-30pt}}
\cdot \hspace*{1pt} \overbrace{\lbrace I_{2B} \otimes C \rbrace}^{\textnormal{diagonal}} \hspace*{1pt}\cdot\hspace*{2pt} \vec{f},
\\
&\vec{f} \hspace*{9pt}=\hspace*{2pt} 
&&\!\begin{bmatrix}
                             &                &                              \\[-10pt]
\boxed{Y^{\mathrm{H}\!}}\!\! &                &                              \\[-4pt]
                             & \!\!\ddots\!\! &                              \\
                             &                & \!\!\boxed{Y^{\mathrm{H}\!}} \\[3pt]
\end{bmatrix}\!
\cdot \hspace*{1pt} Q^{\mathrm{T}\!} \cdot \! 
\begin{bmatrix}
                                           &                &                                            \\[-10pt]
\boxed{\tilde{R}_{1-B}^{\mathrm{T}\!}}\!\! &                &                                            \\[-4pt]
                                           & \!\!\ddots\!\! &                                            \\
                                           &                & \!\!\boxed{\tilde{R}_{B-1}^{\mathrm{T}\!}} \\[7pt]
\end{bmatrix}\!
\cdot \hspace*{1pt}P^{\mathrm{T\!}} \cdot \hspace*{1.5pt} \hat{\vec{f}\hspace*{2pt}}\hspace*{-2pt}\!\hspace*{0.5pt},
\end{alignat*}
where the matrices $\tilde{R}_{1-B}, \dots, \tilde{R}_{B-1}$ have again a block structure,
\begin{equation*}
\tilde{R}_{m} \hspace*{2pt}=\hspace*{2pt} 
\underbrace{\!\begin{bmatrix}
                &                &                 \\[-10pt]
\boxed{R_{|m|}} &                &                 \\[-4pt]
                & \!\!\ddots\!\! &                 \\
                &                & \boxed{R_{B-1}} \\[7pt]
\end{bmatrix}\!}_{\hspace*{-22pt}B - |m| \textnormal{ blocks $R_{l}$ of size } B-l \hspace*{1pt}\times\hspace*{1pt} 2B\hspace*{-22pt}}\hspace*{0.5pt},
\end{equation*}
while $P\!$ and $Q$ are suitable permutation matrices. We introduce these permutation matrices here for an 
improved structural depiction; they do not change the asymptotic complexity. Note that the separation variables
results in a factorization of the transformation matrices. The asymptotic complexity is reduced to $\mathcal{O}(B^{6})$ 
by evaluating the matrix-vector products successively. The factorization of the matrix of the inverse 
transform is obtained by taking the Hermitean transpose of the factorized matrix of the forward transform
and dropping the diagonal weight matrices (compare with Section \ref{subsubsec:SFFT}).

Finally, as explained in Section \ref{subsec:FSGLFT}, a fast spherical Fourier transform and inverse 
can be used to reduce the asymptotic complexity to $\mathcal{O}(B^{5})$. This amounts to a factorization 
of the matrices $Y\!$ and $Y^{\mathrm{H}\!}$. Employing the Clenshaw algorithm in the DRT and iDRT further 
reduces the asymptotic complexity to $\mathcal{O}(B^{4})$. This amounts to a factorization of the 
matrices $R_{l}$ and $R_{l}^{\mathrm{T}\!}$, $0 \leq l < B$.

\section{Numerical experiment}\label{sec:experiments}
We realized the naive DSGLFT and iDSGLFT of Section \ref{subsec:DSGLFT}, as well as the FSGLFT and iFSGLFT 
of Section \ref{subsec:FSGLFT} in MathWorks' MATLAB R2015a. For the spherical subtransform in the FSGLFT/iFSGL\-FT, 
we implemented the seminaive spherical Fourier transform and inverse of \cite{sfft2}, described in 
Section \ref{subsubsec:SFFT}, using the built-in FFT and inverse as well as the built-in fast DCT and inverse. 
No parallelization was done.

In both the DSGLFT/iDSGLFT and the FSGLFT/iFSGLFT, we precomputed the sampling radii $r_{i}$ and transformed 
sampling angles $(\cos\vartheta_{j},\varphi_{k})$ for the bandlimits listed below with high precision in Wolframs' 
Mathematica 10, and stored them double format. We did the same for the corresponding spherical quadrature weights 
$b_{j}$ and the modified radial quadrature weights $\tilde{a}_{i}$. 
For the seminaive spherical Fourier transform and inverse, we precomputed the transformed 
vectors $C_{2B} \vec{P}_{\!lm\!}$ (cf.\ Section \ref{subsubsec:SFFT}) for all bandlimits below in Matlab, 
and stored them in double format.

The actual testruns were performed on a Unix system with a 3.40 GHz Intel Core i7-3770 CPU. We iterated through 
the bandlimits $B = 2,4,8,16,32$. For each bandlimit, we generated random SGL Fourier coefficients $\hat{f}_{nlm}$. 
Both the real part and the imaginary part were uniformly distributed between $-1$ and $1$. We then performed the 
iDSGLFT as well as the iFSGLFT on these Fourier coefficients to reconstruct the corresponding function values 
$f(r_{i},\vartheta_{j},\varphi_{k})$. Subsequently, we transformed the function values back into SGL Fourier 
coefficients $\hat{f}_{nlm}^{\hspace*{0.5pt}\circ}$, using the DSGLFT and FSGLFT, respectively. We measured the total runtime 
of one forward and subsequent inverse transform, and the absolute and relative (maximum) transformation 
error,
\begin{equation*}
\max_{|m| \leq l < n \leq B} |\hat{f}_{nlm} - \hat{f}_{nlm}^{\hspace*{0.5pt}\circ}| ~~~~ \textnormal{and} ~~~~ \max_{|m| \leq l < n \leq B} \frac{|\hat{f}_{nlm} - \hat{f}_{nlm}^{\hspace*{0.5pt}\circ}|}{|\hat{f}_{nlm}|},
\end{equation*}
respectively.
We repeated the above procedure $10$ times, and determined the average runtime, the average absolute 
transformation error, and the average relative transformation error for each bandlimit.
We then performed the entire testrun again for $B\!\hspace*{1pt} = 64$ with the fast transforms. 

Table \ref{table:errors} shows the results of the error measurement. The results of the runtime measurement
are listed in Table \ref{table:runtime}.

\begin{table}[t]
\begin{minipage}{\textwidth}
\begin{center}
\begin{tabular}{ccccccc}
  \hline&&\\[-8pt]			
  \hspace*{5pt}$B$\hspace*{5pt} & \hspace*{5pt}iDSGLFT/DSGLFT\hspace*{5pt} & \hspace*{5pt}iFSGLFT/FSGLFT\hspace*{5pt} \\\hline
                                &                                          &                                          \\[-8pt]
    2 & (5.57\:$\pm$\:1.67)\:\small{E}\:\small$-16$ & (3.85\:$\pm$\:1.08)\:\small{E}\:\small$-16$ \\
    & (7.70\:$\pm$\:1.69)\:\small{E}\:\small$-16$ & (4.64\:$\pm$\:1.36)\:\small{E}\:\small$-16$ \\[3pt]
  4 & (1.35\:$\pm$\:0.25)\:\small{E}\:\small$-15$ & (8.45\:$\pm$\:1.23)\:\small{E}\:\small$-16$ \\
    & (3.32\:$\pm$\:2.10)\:\small{E}\:\small$-15$ & (2.23\:$\pm$\:1.60)\:\small{E}\:\small$-15$ \\[3pt]
  8 & (5.45\:$\pm$\:0.63)\:\small{E}\:\small$-15$ & (1.66\:$\pm$\:0.18)\:\small{E}\:\small$-15$ \\
    & (2.30\:$\pm$\:2.18)\:\small{E}\:\small$-14$ & (4.51\:$\pm$\:1.11)\:\small{E}\:\small$-15$ \\[3pt]
 16 & (2.01\:$\pm$\:0.33)\:\small{E}\:\small$-14$ & (3.96\:$\pm$\:0.51)\:\small{E}\:\small$-15$ \\
    & (1.99\:$\pm$\:1.25)\:\small{E}\:\small$-13$ & (2.98\:$\pm$\:1.31)\:\small{E}\:\small$-14$ \\[3pt]
 32 & (6.39\:$\pm$\:0.89)\:\small{E}\:\small$-14$ & (6.36\:$\pm$\:0.55)\:\small{E}\:\small$-15$ \\
    & (6.82\:$\pm$\:1.75)\:\small{E}\:\small$-13$ & (1.79\:$\pm$\:1.41)\:\small{E}\:\small$-13$ \\[3pt]
 64 & -                                           & (3.50\:$\pm$\:0.41)\:\small{E}\:\small$-14$ \\
    & -                                           & (8.45\:$\pm$\:2.87)\:\small{E}\:\small$-13$ \\[3pt]

  &&\\[-12pt]\hline
\end{tabular}\vspace*{4pt}
\end{center}
\end{minipage}
\caption{\textbf{(First row)} average absolute and \textbf{(second row)} average relative 
transformation error of one inverse and subsequent forward DSGLFT/FSGLFT, 
respectively.}\vspace*{6pt}\label{table:errors}
\end{table}

\begin{table}[t]
\begin{minipage}{\textwidth}
\begin{center}
\begin{tabular}{ccccccc}
  \hline&&\\[-8pt]			
  \hspace*{5pt}$B$\hspace*{5pt} & \hspace*{5pt}iDSGLFT/DSGLFT\hspace*{5pt} & \hspace*{5pt}iFSGLFT/FSGLFT\hspace*{5pt} \\\hline
                                &                                          &                                          \\[-8pt]
    2 & 2.34\:\small{E}\:\small$-2$\:s & 7.86\:\small{E}\:\small$-3$\:s \\[2pt]
  4 & 3.80\:\small{E}\:\small$-1$\:s & 2.93\:\small{E}\:\small$-2$\:s \\[2pt]
  8 & 1.56\:\small{E}\:\small$+1$\:s & 1.44\:\small{E}\:\small$-1$\:s \\[2pt]
 16 & 9.10\:\small{E}\:\small$+2$\:s & 8.68\:\small{E}\:\small$-1$\:s \\[2pt]
 32 & 5.27\:\small{E}\:\small$+4$\:s & 6.00\:\small{E}\:\small$+0$\:s \\[2pt]
 64 & -                              & 4.85\:\small{E}\:\small$+1$\:s \\[2pt]

  &&\\[-12pt]\hline
\end{tabular}\vspace*{4pt}
\end{center}
\end{minipage}
\caption{Average runtime of one inverse and subsequent forward DSGLFT/FSGLFT, 
respectively.}\vspace*{0pt}\label{table:runtime}
\end{table}

\section{Discussion, conclusions, and future developments}\label{sec:discussion}

As mentioned in Section \ref{sec:intro}, the SGL basis functions are nowadays used extensively in the simulation of \emph{biomolecular 
recognition processes}, such as protein-protein or protein-ligand docking. This is due to the existence 
of an elaborate machinery of fast \emph{SGL matching algorithms} (see \citep{gto, gto2}, and the references contained therein). All 
of these algorithms are spectral methods, i.e., they require the computation of the SGL Fourier coefficients of so-called 
\emph{affinity functions} prior to the actual (docking) simulation. This task is currently accomplished by sampling the affinity 
function $f$ of interest onto a regular Cartesian grid and using a \emph{midpoint method} for numerical integration:
\begin{equation*}
\hat{f}_{nlm} \hspace*{2pt}\approx\hspace*{2pt} \sum_{k} f(\vec{x}_{k}) \hspace*{1pt} \overline{H_{nlm}(\vec{x}_{k})} \hspace*{1pt} \Delta V, \vphantom{sum_{k}^{k}}
\end{equation*}
where $\vec{x}_{k}$ is the midpoint of the $k$th cell, and $\Delta V$ is the cell volume. 

While this approach is easily realized and useful for moderate problem sizes, it does benefit from the special structure 
\eqref{eq:sgl} of the SGL basis functions, and there is no guarantee for exactness. Our fast SGL Fourier transforms, 
on the other hand, crucially benefit from the special structure of the SGL basis functions and guarantee exactness in the
sense of our SGL sampling theorem (Theorem \ref{thm:SGL_sampling_theorem}). Specifically, the special structure of the SGL 
basis function allows to separate our discrete SGL Fourier transforms into a spherical and a radial subtransform, and, thus, 
to avoid computational redundancy to a large extent. 

The results in Tables \ref{table:errors} and \ref{table:runtime} clearly show that the FSGLFT and iFSGLFT
tested in Section \ref{sec:experiments} work very well for all bandlimits considered: The absolute and relative 
transformation errors are significantly smaller than those of the naive DSGLFT and iDSGLFT. This is
due to the smaller total number of operations in the fast transforms, resulting in less round-off error. The total runtime
of FSGLFT and iFSGLFT was significantly lower in all cases\:--\:even for the smallest bandlimits, which is typically not the case.
Since the bandlimits $B \leq 32$ are of most practical relevance, an interesting question for further research 
is how the SGL matching algorithms of Ritchie et al.\ perform in combination with these fast transforms. 

In the spherical subtransform of the fast SGL Fourier transforms tested in Section \ref{sec:experiments}, we used the 
seminaive fast spherical Fourier transform and inverse of \cite{sfft2}. The seminaive variant appears to be the optimal choice 
for bandlimits $B \leq 128$. For larger bandlimits, there are other variants described by \citeauthor{sfft2}, which 
should be considered. Note, however, that all of these variants result in the same asymptotic complexity $\mathcal{O}(B^{4})$
of our fast transforms. 

The spherical quadrature rules of \cite{sfft} (Theorem \ref{thm:S2_sampling_theorem}) and the Gaussian quadrature
rules (Theorem \ref{thm:half-range_Hermite}) yield an asymptotically optimal relation between the number of SGL Fourier
coefficients ($\mathcal{O}(B^{3})$) and sampling points on $\mathbb{R}^{3\!}$\hspace*{1pt} ($\mathcal{O}(B^{3})$). Spherical 
quadrature rules with a lower total number of sampling points and corresponding fast spherical Fourier transforms
are described in \citep{sfft4}. These fast transforms can easily be used in our framework as well, leaving the total
asymptotic complexity again untouched. 

In Section \ref{subsec:DSGLFT}, we introduced the factor $\exp(r_{i}^{2})$ to compensate for the fast decay of 
the quadrature weights $a_{i}$. This modification was accounted for by weighting the SGL basis function samples 
$H_{nlm}(r_{i},\vartheta_{j},\varphi_{k})$ by the factor $\exp(-r_{i}^{2})$, which was done during runtime
in Section \ref{sec:experiments}. We found that such adjustment is essential for bandlimits $B \geq 64$, when 
working with double precision. For bandlimits $B\! < 64$, the above modification results in slightly lower
transformation errors, which is why we used this adjustment consistently for all bandlimits. Of course, our approach 
requires the precomputation of the modified weights $\tilde{a}_{i}$ using a high precision; 
it does not affect the asymptotic complexity. 
Due to the absence of the weights $a_{i}$, there is generally no modification required in the inverse transforms.

For all bandlimits considered in this paper, the storage requirements of the precomputed data are not an issue. 
In the case $B = 64$, for example, the precomputed data for the FSGLFT and iFSGLFT of Section \ref{sec:experiments} 
require approximately 25\:MB of free disk space. For completeness, we record that the storage complexity of these 
fast transforms is $\mathcal{O}(B^{3})$ for both disk space and memory. This due to the precomputed data $C_{2B} 
\vec{P}_{lm}$ of Section \ref{subsubsec:SFFT}, which are stored on the disk and loaded during runtime, and the 
data in memory being processed. In general, the disk space requirements of our fast transforms are essentially the 
same as those of the particular spherical Fourier transforms employed.

As a more theoretical remark, we note that it is possible to obtain a true $\mathcal{O}(B^{3} \log^{2\!} B)$ 
FSGLFT/iFSGLFT by using an $\mathcal{O}(B^{2} \log^{2\!} B)$ variant of the spherical Fourier transforms of 
\citeauthor{sfft2}, and interchanging the Clenshaw algorithm in our discrete $R$ transform of Section 
\ref{subsubsec:DRT} with an $\mathcal{O}(B \log^{2\!} B)$ \emph{fast discrete polynomial transform} (refer to 
\citep{fdpt}, for instance). 
Note, however, that the smaller asymptotic complexity has to be traded with an increased storage complexity 
and a larger constant prefactor in runtime. Therefore, we do not expect a benefit of this approach for the 
bandlimits currently used in practice.

\begin{sloppypar}
Another interesting task for future research is to extend our fast SGL Fourier transforms in such a way that they 
can be applied to \emph{scattered} (i.e., non-gridded) data. This has already been achieved successfully in the 
classical FFT (see, e.g., \citep{nfft}), as well as in other generalized FFTs (see \citep{sfft3, sofft2}, for example).
\end{sloppypar}

Finally, we would like to emphasize that our fast SGL Fourier transforms are \emph{polynomial transforms}. 
With little adaptions, our approach can also be used for similar combinations of spherical harmonics and 
generalized Laguerre polynomials, such as those stated in \cite[Sect.\ 5.1.3]{dunkl_xu}, or the radially scaled
SGL basis functions of \citeauthor{gto}. Moreover, the underlying domain $\mathbb{R}^{3\!}$ of our transforms is 
non-compact. Notably, the experiments of Section \ref{sec:experiments} are one of the first performances of 
generalized FFTs on a non-compact domain. Other examples in this direction can be found in 
\citep{letzte}.

\begin{acknowledgement}
The authors would like to thank the referee for their valuable comments. Furthermore, the authors would like to thank 
Daniel Potts for pointing out the Clenshaw algorithm, Denis-Michael Lux for his assistance in programming, and Thomas 
Buddenkotte for scientific discussion. Finally, both authors are grateful to the Graduate School for Computing in 
Medicine and Life Sciences at the University of L\"ubeck. 
\end{acknowledgement}

\bibliography{references}
\bibliographystyle{spbasic}

\end{document}